\documentclass[12pt]{amsart}

\usepackage{amsmath}
\usepackage{amssymb}
\usepackage{latexsym}
\usepackage{graphicx} 
\usepackage{mathrsfs}
\usepackage{mathtools}
\usepackage{amsthm}
\usepackage{tikz}
\usetikzlibrary{cd}
\allowdisplaybreaks[4]

\usepackage[colorlinks,citecolor=blue,linkcolor=blue,linktocpage,unicode]{hyperref}

\numberwithin{equation}{section}
\numberwithin{figure}{section}

\newtheorem{thm}{Theorem}[section]
\newtheorem{prop}[thm]{Proposition}
\newtheorem{lemma}[thm]{Lemma}
\newtheorem{cor}[thm]{Corollary}
\newtheorem{problem}[thm]{Problem}

\theoremstyle{definition}
\newtheorem{example}[thm]{Example}
\newtheorem{defin}[thm]{Definition}
\newtheorem{remark}[thm]{Remark}

\title[Good involutions of symplectic quandles]{On the nonexistence of good involutions of symplectic quandles}
\date{\today}
\subjclass[2020]{57K12, 20N02, 13A99}
\keywords{quandle, good involution, symmetric quandle, symplectic quandle}

\author{Yasuhito Nakajima}
\address{Yasuhito Nakajima, Independent Scholar}
\email{yasuhito.nakajima.mathematics@gmail.com}

\author{Kentaro Yamaguchi}
\address{Kentaro Yamaguchi, Research Institute for Mathematical Sciences, Kyoto University, Kyoto 606-8502, Japan}
\email{yamaguchi.kentaro.2s@kyoto-u.ac.jp}

\begin{document}
\begin{abstract}
We investigate the necessary and sufficient condition for the existence of good involutions of symplectic quandles, which are defined on free $R$-modules with an antisymmetric bilinear form. In particular, we discuss the nonexistence of good involutions of symplectic quandles. 
\end{abstract}

\maketitle

\section{Introduction}
\label{sec: introduction}

A \textit{quandle} is an algebraic structure introduced by Joyce \cite{MR638121} and Matveev \cite{MR672410} in knot theory.
From a viewpoint of algebra, a quandle is an idempotent right-invertible self-distributive algebraic structure. This structure appears in many aspects of mathematics, for example, as an axiomization of the properties of conjugation in a group or a generalization of point symmetries of symmetric spaces.

A \textit{symmetric quandle} \cite{MR2657689} is a quandle with a \textit{good involution} \cite{MR2371714}.
Symmetric quandles are useful to study knots that are not necessary oriented or orientable.
The existence of good involutions of a given quandle (and hence the existence of a symmetric quandle) is quite nontrivial; thus the following classification problems were posed by Taniguchi \cite{MR4688855}.
\begin{problem}\label{prob: condition}
Find the necessary and sufficient condition for the existence of good involutions of a quandle $X$.
\end{problem}
\begin{problem}\label{prob: isomorphism class}
Determine all isomorphism class of symmetric quandles for a quandle $X$.
\end{problem}

Some classification results exist in the literatures.
Kamada--Oshiro \cite{MR2657689} studied the above problems when $X$ is a kei, trivial quandle, and dihedral quandle.
Taniguchi \cite{MR4688855} studied the above problems when $X$ is a generalized Alexander quandle. 
More recently, Ta studied the above problems when $X$ is a conjugation subquandle \cite{ta2025goodinvolutionsconjugationsubquandles}, twisted conjugation subquandle and Alexander quandle \cite{ta2025goodinvolutionstwistedconjugation}.

\subsection{Main results}
In this paper, we investigate answers to Problem \ref{prob: condition} and Problem \ref{prob: isomorphism class} for symplectic quandles, introduced by Navas--Nelson \cite{MR2493966}.

A \textit{symplectic quandle} $(M,*)$ is a quandle of a finite rank free module $M$ over a commutative ring $R$ with identity equipped with an antisymmetric bilinear form.
A typical example of symplectic quandles is a quandle over a symplectic vector space.
Since the trivial quandle over any module over $R$ is also an example of symplectic quandles, we investigate symplectic quandles that are not the trivial quandle.
Such symplectic quandles are called \textit{nontrivial symplectic quandles}.

We first discuss the existence or nonexistence of $R$-linear good involutions of symplectic quandles.

\begin{thm} \label{thm: 1}
    Let $(M,*)$ be a nontrivial symplectic quandle over an integral domain $R$.
    Then the following are equivalent.
    \begin{itemize}
        \item There exists an $R$-linear good involution of $(M,*)$.
        \item $(M,*)$ is a kei.
        \item The characteristic of $R$ is 2.
    \end{itemize}
\end{thm}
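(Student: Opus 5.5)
Recall the symplectic quandle operation: $x * y = x + \langle x, y\rangle y$, where $\langle\cdot,\cdot\rangle$ is antisymmetric ($\langle x,x\rangle=0$). Its inverse is $x *^{-1} y = x - \langle x,y\rangle y$, because $\langle x + c y, y\rangle = \langle x,y\rangle$. A good involution is an involution $\rho$ with two properties: $\rho(x*y)=\rho(x)*y$, and $x * \rho(y) = x *^{-1} y$ for all $x,y$. Nontriviality means the form is not identically zero. The plan is to prove the cycle (3)$\Rightarrow$(2)$\Rightarrow$(1)$\Rightarrow$(3), where (1) is existence of a linear good involution, (2) is being a kei, and (3) is $\operatorname{char} R=2$.

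\textbf{(3)$\Rightarrow$(2).} Compute $(x*y)*y = x + 2\langle x,y\rangle y$. This equals $x$ whenever $2=0$, so the quandle is a kei.

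\textbf{(2)$\Rightarrow$(1).} For a kei the identity map is a good involution. Indeed, $x*y=x*^{-1}y$ in a kei, and the first condition is trivially satisfied. The identity is $R$-linear.

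\textbf{(2)$\Rightarrow$(3).} This is not strictly needed for the cycle, but it is easy. Suppose $2\langle x,y\rangle y=0$ for all $x,y$. Choose $x,y$ with $\langle x,y\rangle\neq 0$; then $y\neq0$. Since $M$ is free over a domain, it is torsion-free, so $2\langle x,y\rangle=0$ in $R$. Cancelling the nonzero element $\langle x,y\rangle$ in the domain $R$ gives $2=0$.

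\textbf{(1)$\Rightarrow$(3).} This is the main step. Let $\rho$ be linear, and apply the second condition:
\[
x+\langle x,\rho(y)\rangle\rho(y) = x - \langle x,y\rangle y \quad \text{for all } x,y,
\]
so $\langle x,\rho(y)\rangle\rho(y) = -\langle x,y\rangle y$.

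Pick $x,y$ with $\langle x,y\rangle\neq0$. Then $\rho(y)$ and $y$ are proportional over the fraction field $K$ of $R$. Since the right side is nonzero, we get $\rho(y)=\lambda y$ for some $\lambda\in K^\times$. Substituting back gives $\lambda^2\langle x,y\rangle y = -\langle x,y\rangle y$. Working in the free module, hence inside $K^n$, and cancelling yields $\lambda^2=-1$.

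Now use linearity and the first condition. Because $\rho$ is $R$-linear,
\[
\rho(x*y)=\rho(x)+\langle x,y\rangle\rho(y) = \rho(x)+\lambda\langle x,y\rangle y,
\]
while
\[
\rho(x)*y=\rho(x)+\langle \rho(x),y\rangle y.
\]
Hence $\langle\rho(x),y\rangle=\lambda\langle x,y\rangle$. By the same argument with the roles of $x$ and $y$ exchanged (using antisymmetry), $\rho(x)=\mu x$ with $\mu^2=-1$. Then
\[
\langle\rho(x),y\rangle=\mu\langle x,y\rangle,
\]
so $\mu=\lambda$. The involution condition $\rho(\rho(y))=y$ gives $\lambda^2 y = y$, so $\lambda^2=1$. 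Combined with $\lambda^2=-1$, this gives $1=-1$ in $K$, so $\operatorname{char}R=2$.

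The delicate point is justifying the proportionality and the cancellations. It should be enough to pass to $K\otimes M$, which is legitimate because $M$ is free and $R$ is a domain. An alternative that avoids fractions is to apply the involution property directly. Applying the relation at $\rho(y)$ in place of $y$ and using $\rho^2=\mathrm{id}$ gives $\langle x,y\rangle y=-\langle x,\rho(y)\rangle\rho(y)$. Substituting this into the original relation yields $\langle x,y\rangle y=\langle x,y\rangle y\cdot(-1)(-1)$, which is vacuous. So $\lambda^2=1$ must come from linearity, as above.
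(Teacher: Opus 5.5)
Your proof is correct, but your argument for (1)$\Rightarrow$(3) takes a different route from the paper's. The paper argues globally. From the first condition and idempotency it gets $\langle\rho(x),x\rangle=0$, and polarizing this shows that a linear $\rho$ satisfying the first condition is symplectic (Proposition~\ref{prop: symplectic involution}). Using both conditions it shows $\rho$ is anti-symplectic (Proposition~\ref{prop: anti-symplectic involution}), hence $2\langle x,y\rangle=0$ for all $x,y$.

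You instead work locally at a single pair with $\langle x,y\rangle\neq 0$. The second condition forces $\rho(y)=\lambda y$ with $\lambda^2=-1$ in $\mathrm{Frac}(R)$. Then $R$-linearity, extended to $\mathrm{Frac}(R)\otimes_R M$ into which $M$ embeds, together with $\rho^2=\mathrm{id}$, forces $\lambda^2=1$.

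Your detour through the first condition is never used in the final contradiction and can be deleted. That detour is the identity $\langle\rho(x),y\rangle=\lambda\langle x,y\rangle$, the scalar $\mu$, and the conclusion $\mu=\lambda$. Without it, your route proves slightly more: an $R$-linear involution satisfying only the second axiom already forces characteristic $2$. It also pinpoints where linearity enters, which is exactly what the non-linear good involution of Example~\ref{eg: good involution} evades by swapping $i$ and $-i$ between $y$ and $\rho(y)$. In effect you run the paper's own argument from Lemma~\ref{lem: good involution implies expansion hyperbolic pair} and Theorem~\ref{thm: main 3}, with linearity replacing the chain of quandle operations the paper uses there to compute $\rho(\lambda e)$. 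The paper's route, in exchange, gives the conceptual link to symplectic and anti-symplectic involutions.

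You flagged the passage to fractions as the delicate point; it can be avoided entirely. Put $c=\langle x,\rho(y)\rangle$ and $d=\langle x,y\rangle$, so that $c\rho(y)=-dy$. Pairing with $x$ gives $c^2=-d^2$. Applying $\rho$ gives $cy=-d\rho(y)$, and multiplying by $c$ gives $c^2y=d^2y$, so $c^2=d^2$ by torsion-freeness. Hence $2d^2=0$ with $d\neq 0$, and so $2=0$ in $R$.
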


In particular, when the characteristic of an integral domain $R$ is not 2, we show the nonexistence of $R$-linear good involutions of nontrivial symplectic quandles over $R$.
Therefore, we answer Problem \refeq{prob: condition} for nontrivial symplectic quandles with $R$-linear good involutions when $R$ is an integral domain.
Moreover, we compare linear good involutions with linear \textit{symplectic involutions} or \textit{anti-symplectic involutions}.

Note that the $R$-linearity of good involutions is essential in Theorem \ref{thm: 1}.
In fact, we give an example of a symplectic quandle that is not a kei but has a good involution which is not $R$-linear (see Example \ref{eg: good involution}).

When a symplectic quandle $(M,*)$ is a kei (equivalently, when the characteristic of $R$ is $2$), we discuss the nonexistence of good involutions of $(M,*)$ other than the identity map.
\begin{thm} \label{thm: 2}
    Let $(M,*)$ be a nontrivial symplectic quandle over an integral domain $R$.
    Assume that the characteristic of $R$ is $2$.
    If the antisymmetric bilinear form on $M$ is nondegenerate, then the identity map of $M$ is the only good involution of $(M,*)$.
\end{thm}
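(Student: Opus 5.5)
The plan is to use only the second axiom of a good involution, $x*\rho(y)=x*^{-1}y$ for all $x,y\in M$. Throughout I use the Navas--Nelson operation $x*y=x+\langle x,y\rangle y$, where $\langle\,,\rangle$ is the antisymmetric form.

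Since the characteristic of $R$ is $2$, Theorem \ref{thm: 1} shows that $(M,*)$ is a kei, so $*^{-1}=*$. The axiom therefore reads $x*\rho(y)=x*y$, which is equivalent to
\begin{equation*}
\langle x,\rho(y)\rangle\,\rho(y)=\langle x,y\rangle\, y \quad\text{for all } x,y\in M.
\end{equation*}
I will show that this identity, together with nondegeneracy, forces $\rho(y)=y$ for every $y$. By nondegeneracy I mean that for each $y\neq 0$ there is $x$ with $\langle x,y\rangle\neq 0$; both usual definitions over an integral domain imply this. Since $M$ is free over an integral domain, it is torsion-free, and it embeds in $M\otimes_R K$, where $K$ is the fraction field of $R$.

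\emph{Case $y=0$.} Put $z=\rho(0)$. The identity gives $\langle x,z\rangle z=0$ for all $x$. If $z\neq0$, choose $x$ with $\langle x,z\rangle\neq 0$. Then $\langle x,z\rangle z=0$ contradicts torsion-freeness, so $\rho(0)=0$.

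\emph{Case $y\neq 0$.} Choose $x$ with $a=\langle x,y\rangle\neq0$. The right-hand side $ay$ is then nonzero. Hence $\rho(y)\neq0$ and $c=\langle x,\rho(y)\rangle\neq0$, and we get $c\,\rho(y)=a\,y$. In $M\otimes_R K$ this says $\rho(y)=\lambda y$ with $\lambda=a/c\in K^\times$. Substituting back into the identity, by bilinearity, gives $\lambda^2\langle x,y\rangle y=\langle x,y\rangle y$ for all $x$. Taking $x$ with $\langle x,y\rangle\neq0$ yields $\lambda^2=1$. In characteristic $2$ we have $\lambda^2-1=(\lambda-1)^2$, and $K$ is a field, so $\lambda=1$. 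Hence $\rho(y)=y$.

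So $\rho=\mathrm{id}_M$. Conversely, the identity map is always a good involution of a kei, which gives the theorem.

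The only delicate step is the passage to $K$ to write $\rho(y)$ as a scalar multiple of $y$, since $\rho$ is not assumed $R$-linear. This works pointwise because the identity is evaluated at a fixed $y$ and uses only bilinearity of the form in the first variable. Torsion-freeness of $M$ is what makes the cancellations legitimate.
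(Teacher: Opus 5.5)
Your proof is correct and follows the paper's argument step by step. You first get $\rho(0)=0$ from nondegeneracy and torsion-freeness, then write $\rho(y)=\lambda y$ with $\lambda\in\mathrm{Frac}(R)$, and finally conclude $\lambda=1$ from $\lambda^2=1$ and $(\lambda+1)^2=0$ in characteristic $2$; the appeal to the kei property is harmless but unnecessary, since $-1=1$ in $R$ already turns the second axiom into your displayed identity.
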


Note that Theorem \ref{thm: 2} does not impose the $R$-linearity on good involutions. Thus, we answer Problem \ref{prob: isomorphism class} for symplectic quandles over an integral domain $R$ of characteristic $2$ with a nondegenerate antisymmetric bilinear form.
While we can show that symplectic quandles with a nondegenerate antisymmetric bilinear form are \textit{faithful} and \cite[Corollary 5.5]{ta2025goodinvolutionsconjugationsubquandles} implies Theorem \ref{thm: 2}, our proof is based on the module theoritic aspect of symplectic quandles.

When $R$ is a general commutative ring with identity and the characteristic of $R$ is not $2$,
we also discuss the nonexistence of good involutions of symplectic quandles $(M,*)$ with a \textit{hyperbolic pair} (see \cite[page 586]{MR1878556} for example) of a finite rank free module $M$ over $R$, which is an answer of Problem \ref{prob: isomorphism class} for such symplectic quandles.

\begin{thm} \label{thm: 3}
    Let $(M,*)$ be a symplectic quandle over a commutative ring $R$ with identity.
    Assume that the characteristic of $R$ is not $2$.
    If there exists a \textit{hyperbolic pair} of the $R$-module $M$, then there exists no good involution of $(M,*)$.
\end{thm}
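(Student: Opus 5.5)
The plan is to work with the two axioms of a good involution $\rho$ of a symplectic quandle $x*y=x+\langle x,y\rangle y$: first $\rho(x*y)=\rho(x)*y$, and second $x*\rho(y)=x*^{-1}y=x-\langle x,y\rangle y$ for all $x,y\in M$. Let $(e,f)$ be a hyperbolic pair, so $\langle e,f\rangle=1$. The second axiom says that for every $y$,
\[
\langle x,\rho(y)\rangle\,\rho(y)=-\langle x,y\rangle\,y \qquad\text{for all } x\in M .
\]

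\textbf{Step 1 (local rigidity at $e$).} Put $u=\rho(e)$. Taking $x=u$ gives $\langle u,e\rangle e=0$. Pairing this with $f$ gives $\langle u,e\rangle=0$. Taking $x=f$ gives $c\,u=e$ with $c=\langle f,u\rangle$. Pairing $cu=e$ with $f$ gives $c\langle u,f\rangle=1$, that is, $c^2=-1$. Hence $c$ is a unit with $c^{-1}=-c$, and $\rho(e)=a e$ where $a:=-c$ satisfies $a^2=-1$. The same argument applies to any element having a hyperbolic partner. In particular it applies to $f$ (partner $-e$) and to $-e,\,-f,\,ae$, so each of these is sent by $\rho$ to a scalar multiple of itself by a square root of $-1$.

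\textbf{Step 2 (propagation).} Let $S=\{x\in M:\rho(x)=ax\}$. If $x\in S$, then the first axiom gives $\rho(x*y)=ax+\langle ax,y\rangle y=a(x*y)$, and similarly $\rho(x*^{-1}y)=a(x*^{-1}y)$. So $S$ is closed under the action of the inner group, whose elements are generated by the symplectic transvections $x\mapsto x\pm\langle x,y\rangle y$ with arbitrary $y\in M$. For instance $e*f=e+f$ and $(e+f)*e=f$, so $f,-e\in S$. This is consistent with Step 1 and forces the scalar attached to $f$ to be the same $a$.

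\textbf{Step 3 (contradiction).} On one hand, $\rho(ae)=\rho(\rho(e))=e$. On the other hand, if $ae\in S$, then $\rho(ae)=a^2e=-e$. That would give $2e=0$, hence $2=\langle 2e,f\rangle=0$ in $R$, contradicting the assumption that the characteristic of $R$ is not $2$. So the proof reduces to showing $ae\in S$, i.e.\ that $ae$ lies in the orbit of $e$ under the transvections $x\mapsto x\pm\langle x,y\rangle y$ inside the hyperbolic plane $Re\oplus Rf$.

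This last point is the main obstacle. I would first try to write the linear map $e\mapsto ae,\ f\mapsto -af$, which has determinant $-a^2=1$ and so lies in $\mathrm{SL}_2$, as a product of transvections $v\mapsto v+\langle v,y\rangle y$ with $y\in Re+Rf$. The required scalings should come from choosing $y=\alpha e+\beta f$ with $\alpha,\beta$ built from $a$, using $a^2=-1$; for example, $e*(e+af)=(1+a)e-f$ is one candidate first move. A fallback avoids the orbit question altogether. Apply Step 1 to the pair $(ae,-af)$ to get $\rho(ae)=a'\,ae$, and combine this with the first axiom on mixed elements such as $\rho(ae*f)=\rho(ae)*f$ and $\rho(e*(af))=\rho(e)*(af)$, comparing coefficients of $e$ and $f$ to force $a'=a$. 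Then $\rho(\rho(e))=a^2e=-e\neq e$, as required.
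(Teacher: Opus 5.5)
Your outline matches the paper's proof. Step 1 is its lemma giving $\rho(e)=\lambda e$ with $\lambda^2=-1$. Step 2 is its propagation lemma ($\rho(x)=\lambda x \Rightarrow \rho(x*y)=\lambda(x*y)$). The closing contradiction $e=\rho(ae)=a^2e=-e$ is exactly how the paper finishes. The gap is that you stop at the one step that carries the content, namely showing $ae\in S$, and neither route you propose establishes it.

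The candidate move $e*(e+af)=(1+a)e-f$ is never continued to $ae$. The fallback only restates the goal. Since $\rho(ae)=\rho(\rho(e))=e$, Step 1 applied to $(ae,-af)$ already forces $a'=-a$, so proving $a'=a$ is equivalent to the contradiction you want. Moreover, the mixed identities you cite only propagate each scalar within its own class:
\begin{itemize}
\item $\rho(ae*f)=\rho(ae)*f$ gives $\rho(a(e+f))=a'a(e+f)$;
\item $\rho(e*(af))=\rho(e)*(af)$ gives $\rho(e-f)=a(e-f)$.
\end{itemize}
So no comparison of coefficients links $a'$ to $a$. What is needed is an explicit chain of right multiplications carrying $e$ to $ae$, and the proposal does not supply one.

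The missing chain is short and uses $a^2=-1$ in the middle step. Since $\langle e+f,e+af\rangle=a-1$ and $\langle ae-af,f\rangle=a$,
\[
e*f=e+f,\qquad (e+f)*(e+af)=(e+f)+(a-1)(e+af)=ae-af,\qquad (ae-af)*f=ae .
\]
Hence $ae=((e*f)*(e+af))*f$, so $ae\in S$ by your Step 2, and your Step 3 then finishes the proof. Only the orbit of the single vector $e$ matters. Factoring the whole linear map $e\mapsto ae$, $f\mapsto -af$ into transvections, as in your first strategy, is unnecessary, and over a general commutative ring it is not obviously possible.
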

In particular, we can say that there exists no good involution of the symplectic quandle $(V,*)$ over any symplectic vector space $V$.

Note that this theorem cannot answer Problem \ref{prob: condition} completely and there is a counterexample of the converse of Theorem \ref{thm: 3}.
In fact, we give an example of a symplectic quandle that has no good involution and no hyperbolic pair (see Example \ref{eg: no good involution but no hyperbolic pair}).

\subsection{Outline}
This paper is organized as follows.
In Section \ref{sec: quandle}, we provide preliminaries on quandles and good involutions.
In Section \ref{sec: symplectic quandle}, we recall the definition of symplectic quandles and show some basic properties.
In Section \ref{sec: linear good involution}, we show Theorem \ref{thm: 1}.
In Section \ref{sec: good involution char 2}, we show Theorem \ref{thm: 2}.
In Section \ref{sec: good involution char not 2}, we show Theorem \ref{thm: 3} and give counterexamples for the inverse of Theorem \ref{thm: 3}.

\section{Quandles} \label{sec: quandle}

\begin{defin}[{\cite{MR638121,MR672410}}] \label{def: quandle}
Let $X$ be a set with a binary operation $*:X \times X \to X; (x,y) \mapsto x*y$.
A pair $(X,*)$ is called a \textit{quandle} if the binary operation $*$ satisfies the following three conditions.
\begin{enumerate}
    \item $x*x = x$ for all $x \in X$,
    \item For any $x,y \in X$, there exists a unique element $z \in X$ such that $z*y = x$,
    \item $(x*y)*z = (x*z)*(y*z)$ for any $x,y,z \in X$.  
\end{enumerate}
\end{defin}

Quandles are introduced to define invariants of links or knots.
These conditions are motivated by the Reidemeister moves from knot theory.

\begin{remark} \label{remark: dual quandle}
    Let $(X,*)$ be a quandle.
    From the second condition, a pair $(X,*^{-1})$ is also a quandle, where the binary operation $*^{-1}:X \times X \to X$ is defined by $(x*y)*^{-1}y = x$ and $(x*^{-1}y)*y = x$.
    $(X,*^{-1})$ is called a \textit{dual quandle} and $*^{-1}$ is called the \textit{dual operation}.
\end{remark}

\begin{defin}[c.f. \cite{MR21002}] \label{def: kei}
    A quandle $(X,*)$ is a \textit{kei} or \textit{involutory quandle} if $x*y = x*^{-1}y$ for any $x,y \in X$.
\end{defin}

\begin{example} \label{eg: trivial quandle}
    Let $X$ be a nonempty set. Define the binary operation $*:X \times X \to X$ by $x*y \coloneqq x$. The pair $(X,*)$ is a quandle, called a \textit{trivial quandle}. Trivial quandles are keis.
\end{example}

\begin{defin}[{\cite{MR2371714,MR2657689}}] \label{def: good involution}
    Let $(X,*)$ be a quandle.
    An involution $\rho:X \to X$ is a \textit{good involution} if it satisfies the following conditions.
    \begin{enumerate}
        \item $\rho(x*y) = \rho(x)*y$ for any $x,y \in X$,
        \item $x*\rho(y) = x *^{-1} y$ for any $x,y \in X$.
    \end{enumerate}
    A pair $(X, \rho)$ of a quandle and a good involution of it is called a \textit{symmetric quandle}.
\end{defin}

Symmetric quandles are introduced to define invariants of knots that may not be oriented or orientable.

\begin{example} \label{eg: good involution of trivial quandle}
    Any involution of a trivial quandle is a good involution.
\end{example}

\begin{example} \label{eg: good involution of kei}
    The identity map of a kei is a good involution.
\end{example}

Ta \cite{ta2025goodinvolutionsconjugationsubquandles} discussed the existence of good involutions of racks (quandles without the assumption that $x*x = x$) in general.
We recall the results which are relevant to this paper.

\begin{defin}[{\cite[Definition 3.2]{ta2025goodinvolutionsconjugationsubquandles}}]
    Let $(X,*)$ be a rack. An \textit{antiautomorphism} of $(X,*)$ is a rack isomorphism $\phi:(X,*) \to (X,*^{-1})$.
    Let $\mathrm{Anti}X$ denote the set of antiautomorphisms of $(X,*)$. A rack $(X,*)$ is \textit{self-dual} if $\mathrm{Anti}X \neq \emptyset$.
\end{defin}

Involutory racks are examples of self-dual racks. Moreover, we can say the following.

\begin{lemma}[{\cite[Example 3.4]{ta2025goodinvolutionsconjugationsubquandles}}]
    If a rack $(X,*)$ is involutory, then we obtain the following.
    \begin{itemize}
        \item $(X,*)$ is self-dual.
        \item The identity map $\mathrm{id}_{X}$ is an antiautomorphism of $(X,*)$.
        \item $\mathrm{Anti}X = \mathrm{Aut}(X)$, where $\mathrm{Aut}(X)$ is the set of rack isomorphisms of $(X,*)$.
    \end{itemize}
\end{lemma}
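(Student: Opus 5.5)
The statement is the lemma cited from Ta: if $(X,*)$ is involutory, then $(X,*)$ is self-dual, $\mathrm{id}_X$ is an antiautomorphism, and $\mathrm{Anti}X = \mathrm{Aut}(X)$. The plan is to reduce all three claims to a single observation. Since $(X,*)$ is involutory, Definition \ref{def: kei} (read for racks) gives $x*y = x*^{-1}y$ for all $x,y \in X$. Hence the binary operations $*$ and $*^{-1}$ on $X$ coincide, and the racks $(X,*)$ and $(X,*^{-1})$ are literally the same rack.

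The first step is to show that $\mathrm{id}_X$ is an antiautomorphism. It is a bijection, and for all $x,y$ we have
\[
\mathrm{id}_X(x*y) = x*y = x*^{-1}y = \mathrm{id}_X(x) *^{-1} \mathrm{id}_X(y).
\]
So $\mathrm{id}_X$ is a rack isomorphism $(X,*) \to (X,*^{-1})$, which is the second bullet. Self-duality, the first bullet, follows at once, because $\mathrm{id}_X \in \mathrm{Anti}X$ and therefore $\mathrm{Anti}X \neq \emptyset$.

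For the third bullet, we use that a map $\phi : X \to X$ is an isomorphism $(X,*) \to (X,*^{-1})$ exactly when it is a bijection with $\phi(x*y) = \phi(x) *^{-1} \phi(y)$ for all $x,y$. Because $*^{-1} = *$, this condition reads $\phi(x*y) = \phi(x)*\phi(y)$, which is precisely the condition for $\phi$ to lie in $\mathrm{Aut}(X)$. The two sets are therefore defined by identical conditions, so $\mathrm{Anti}X = \mathrm{Aut}(X)$.

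There is no real obstacle. The only point needing care is to make explicit that the dual operation is well defined for racks, by the right-invertibility axiom as in Remark \ref{remark: dual quandle}, so that the involutory condition really does identify the two operations.
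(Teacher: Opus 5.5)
Your argument is correct: once $*=*^{-1}$, the racks $(X,*)$ and $(X,*^{-1})$ coincide, so $\mathrm{Anti}X=\mathrm{Aut}(X)\ni\mathrm{id}_X$, and this gives all three bullets. The paper gives no proof of its own and simply cites Ta's Example 3.4; your direct verification is the natural argument behind that citation.
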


\begin{lemma}
    Any good involution of a rack $(X,*)$ is an antiautomorphism of $(X,*)$.
\end{lemma}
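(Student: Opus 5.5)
We need to show that a good involution $\rho$ of a rack $(X,*)$ is a rack isomorphism $(X,*)\to(X,*^{-1})$. That means two things: $\rho$ is a bijection, and $\rho(x*y)=\rho(x)*^{-1}\rho(y)$ for all $x,y\in X$.

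Bijectivity is immediate, since $\rho$ is an involution and so $\rho^{-1}=\rho$.

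For the homomorphism identity, we combine the two axioms of Definition~\ref{def: good involution}.
\begin{itemize}
\item The first axiom gives $\rho(x*y)=\rho(x)*y$.
\item Apply the second axiom with $\rho(y)$ in place of $y$. This gives $z*\rho(\rho(y))=z*^{-1}\rho(y)$ for every $z$, that is, $z*y=z*^{-1}\rho(y)$, because $\rho^2=\mathrm{id}_X$.
\item Take $z=\rho(x)$. Then $\rho(x)*y=\rho(x)*^{-1}\rho(y)$.
\end{itemize}
Chaining these equalities yields $\rho(x*y)=\rho(x)*^{-1}\rho(y)$, which is exactly the required identity.

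The only point needing care is to apply the second axiom to $\rho(y)$ rather than to $y$, so that the involutivity converts $*\rho(\rho(y))$ back into $*y$. No idempotency is used anywhere, so the argument works for racks. One should also check that $(X,*^{-1})$ is itself a rack, so that "rack isomorphism" makes sense; this follows as in Remark~\ref{remark: dual quandle}. Beyond this there is no real obstacle.
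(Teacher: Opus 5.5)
Your proof is correct and is the same argument as the paper's one-line chain $\rho(x*y)=\rho(x)*y=\rho(x)*^{-1}\rho(y)$. You make explicit two points the paper leaves implicit: the second equality comes from the second axiom with $y$ replaced by $\rho(y)$ together with $\rho^2=\mathrm{id}_X$, and $\rho$ is bijective because it is an involution.
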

\begin{proof}
    Let $\rho:X \to X$ be a good involution. Then 
    \begin{equation*}
        \rho(x*y) = \rho(x)*y = \rho(x)*^{-1}\rho(y)
    \end{equation*}
    for any $x,y \in X$, i.e. $\rho$ is an antiautomorphism of $(X,*)$.
\end{proof}

\begin{prop}[{\cite[Corollary 4.4]{ta2025goodinvolutionsconjugationsubquandles}}] \label{prop: not self-dual implies no good involution}
    If a rack $(X,*)$ is not self-dual, then $(X,*)$ has no good involutions.
\end{prop}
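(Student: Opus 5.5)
The plan is to argue by contraposition: assume $(X,*)$ has a good involution $\rho$, and show that $(X,*)$ must then be self-dual. Once this is established, Proposition \ref{prop: not self-dual implies no good involution} follows immediately, because a rack that is not self-dual cannot admit any good involution.

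The key step is the preceding lemma, which says that every good involution is an antiautomorphism. It remains to check that $\rho$ really is a bijection $X \to X$, so that it is a rack isomorphism $(X,*) \to (X,*^{-1})$ and not merely a homomorphism. This is automatic: $\rho$ is an involution, so $\rho\circ\rho = \mathrm{id}_X$, and therefore $\rho$ is bijective and is its own inverse.

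For the homomorphism property, I would use the two defining conditions of a good involution. Condition (1) gives
\[
\rho(x*y) = \rho(x)*y .
\]
Applying condition (2) with $y$ replaced by $\rho(y)$, and using $\rho(\rho(y)) = y$, gives
\[
\rho(x)*y = \rho(x)*\rho(\rho(y)) = \rho(x)*^{-1}\rho(y).
\]
Combining the two, $\rho(x*y) = \rho(x)*^{-1}\rho(y)$ for all $x,y$. Thus $\rho \in \mathrm{Anti}X$, so $\mathrm{Anti}X \neq \emptyset$, and $X$ is self-dual, which gives the contrapositive.

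There is no real obstacle here. The only point needing care is the substitution $y \mapsto \rho(y)$ in condition (2) together with $\rho^2 = \mathrm{id}$, which turns the right-hand side into the dual operation evaluated at $\rho(y)$.
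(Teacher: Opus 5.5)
Your proposal is correct and follows the paper's route: the paper cites this result from Ta, but the lemma just before it shows $\rho(x*y)=\rho(x)*y=\rho(x)*^{-1}\rho(y)$, so every good involution lies in $\mathrm{Anti}X$, and a non-self-dual rack therefore has none. Your remarks on the bijectivity of $\rho$ and on the substitution $y\mapsto\rho(y)$ in condition (2) spell out details the paper leaves implicit.
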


\section{Symplectic quandles} \label{sec: symplectic quandle}

\begin{defin}
    Let $R$ be a commutative ring with identity and $M$ a finite rank free $R$-module with an antisymmetric bilinear form $\langle, \rangle:M \times M \to R$ such that $\langle x,x \rangle = 0$ for any $x \in M$.
    Let $M^{\vee}$ denote the dual of $M$.
    An antisymmetric bilinear form $\langle, \rangle$ on $M$ is said to be \textit{nondegenerate} if the map $M \to M^{\vee}$ defined by $x \mapsto \langle x, -\rangle$ is injective.
    An antisymmetric bilinear form $\langle, \rangle$ on $M$ is said to be \textit{unimoduler} if the map $M \to M^{\vee}$ defined by $x \mapsto \langle x, -\rangle$ is bijective.
\end{defin}

Note that if $R$ is not a field, then the nondegeneracy of antisymmetric bilinear forms does not imply unimodulerity.

\begin{defin}[{\cite{MR2493966}}]
    Let $R$ be a commutative ring with identity and $M$ be a finite rank free $R$-module with an antisymmetric bilinear form $\langle, \rangle:M \times M \to R$ such that $\langle x,x \rangle = 0$ for any $x \in M$. Define the binary operation $*:M\times M \to M$ by 
    \begin{equation*}
        x*y \coloneqq x + \langle x,y \rangle y.
    \end{equation*}
    The pair $(M,*)$ is a \textit{symplectic quandle over $R$}.
    A \textit{nondegenerate symplectic quandle over $R$} is a symplectic quandle with a nondegenerate antisymmetric bilinear form $\langle, \rangle$.
\end{defin}

\begin{remark}
    The dual quandle $(M,*^{-1})$ of a symplectic quandle $(M,*)$ is defined by 
    \begin{equation*}
        x*^{-1}y = x - \langle x,y \rangle y.
    \end{equation*} 
\end{remark}

We obtain the following property.

\begin{prop} \label{prop: right-action is linear}
Let $(M,*)$ be a symplectic quandle over a commutative ring $R$ with identity. 
For any $y \in M$, define a map $s_{y}:M \to M$ by $s_{y}(x) \coloneqq x * y$.
Then the map $s_{y}$ is an $R$-linear invertible map.
\end{prop}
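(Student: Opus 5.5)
The plan is to verify linearity directly from bilinearity of the form, and then to exhibit an explicit two-sided inverse using the dual operation from the preceding Remark.

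\emph{Linearity.} Fix $y \in M$. For $x, x' \in M$ and $a \in R$ we have
\begin{equation*}
s_{y}(ax + x') = ax + x' + \langle ax + x', y \rangle y = a\bigl(x + \langle x,y\rangle y\bigr) + \bigl(x' + \langle x',y\rangle y\bigr) = a s_{y}(x) + s_{y}(x').
\end{equation*}
This uses only that $\langle -, y\rangle$ is $R$-linear. So $s_{y}$ is $R$-linear; equivalently, $s_{y} = \mathrm{id}_{M} + \langle -,y\rangle y$ is the identity plus a rank-at-most-one linear map.

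\emph{Invertibility.} Define $t_{y}(x) \coloneqq x - \langle x,y\rangle y$, which is linear by the same computation. I will check directly that $t_{y}\circ s_{y} = s_{y}\circ t_{y} = \mathrm{id}_{M}$. The key input is the hypothesis $\langle y,y\rangle = 0$:
\begin{equation*}
t_{y}(s_{y}(x)) = x + \langle x,y\rangle y - \langle x + \langle x,y\rangle y, y\rangle y = x + \langle x,y\rangle y - \langle x,y\rangle y - \langle x,y\rangle\langle y,y\rangle y = x.
\end{equation*}
The computation for $s_{y}\circ t_{y}$ is symmetric, with the signs swapped. Hence $s_{y}$ is bijective with inverse $t_{y}$, which is also linear. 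Note that $t_{y}(x) = x *^{-1} y$, which matches the Remark.

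There is no serious obstacle here. The only point needing care is that the cross term $\langle x,y\rangle\langle y,y\rangle y$ vanishes. This holds because of the standing assumption $\langle x,x\rangle = 0$ for all $x$, not merely because the form is antisymmetric, which matters in characteristic $2$.
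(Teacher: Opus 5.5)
Your proof is correct. The linearity computation is the same as the paper's, which checks additivity and homogeneity separately rather than in one line.

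The difference is in invertibility. The paper only says that it ``follows from the definition of quandles,'' i.e.\ it invokes axiom (2) for $(M,*)$. That in turn rests on the cited result of Navas--Nelson that $x*y = x+\langle x,y\rangle y$ really defines a quandle. You instead verify bijectivity directly: you exhibit $t_{y} = (-)*^{-1}y$ and check both composites using $\langle y,y\rangle = 0$.

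The paper's route is shorter but depends on that external fact. Yours is self-contained and in effect proves the right-invertibility axiom for symplectic quandles. It also justifies the formula $x*^{-1}y = x - \langle x,y\rangle y$, which the paper's Remark states without proof.

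Your observation that killing the term $\langle x,y\rangle\langle y,y\rangle y$ needs the alternating condition, not just antisymmetry, is also accurate. Antisymmetry alone only gives $2\langle y,y\rangle = 0$.
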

\begin{proof}
Invertibility of the map $s_{y}: M \to M$ follows from the definition of quandles.
For any $x, x^{\prime} \in M$, 
\begin{align*}
    s_{y}(x + x^{\prime}) 
    &= (x + x^{\prime})*y \\
    &= (x + x^{\prime}) + \langle (x + x^{\prime}), y \rangle y \\
    &= x + x^{\prime} + \langle x,y \rangle y + \langle x^{\prime},y \rangle y \\
    &= x*y + x^{\prime}*y \\
    &= s_{y}(x) + s_{y}(x^{\prime}).
\end{align*}
For any $x \in M$ and any $r \in R$,
\begin{align*}
    s_{y}(rx) 
    &= (rx) + \langle (rx),y \rangle y \\
    &= rx + r\langle x,y \rangle y \\
    &= r(x + \langle x,y \rangle y) \\
    &= r(x*y) \\
    &= r s_{y}(x).
\end{align*}
Thus the map $s_{y}:M \to M$ is $R$-linear for any $y \in M$.
\end{proof}

Here is a typical example of a symplectic quandle. 

\begin{example} \label{eg: symplectic vector space}
    Let $V$ be a $\mathbb{R}$-vector space with a nondegenerate antisymmetric bilinear form $\langle, \rangle:V \times V \to \mathbb{R}$. The pair $(V,\langle,\rangle)$ is called a \textit{symplectic vector space}.
    From a symplectic vector space, we can define a nondegenerate symplectic quandle $(V,*)$.
\end{example}

A trivial quandle on a finite rank free $R$-module is also an example of symplectic quandles.

\begin{example}
    Let $M$ be a finite rank free $R$-module. Define the bilinear form $\langle, \rangle: M \times M \to R$ by 
    \begin{equation*}
        \langle x,y \rangle \coloneqq 0
    \end{equation*}
    for any $x,y \in M$.
    Then the pair $(M,*)$ is a symplectic quandle, where
    \begin{equation*}
        x*y = x + \langle x,y \rangle y = x.
    \end{equation*}
    This symplectic quandle is nothing but a trivial quandle.
\end{example}

\begin{example}
    Let $\{0\}$ denote the zero module over any commutative ring $R$ with identity.
    Then the trivial quandle $(\{0\},*)$ is a nondegenerate symplectic quandle.
\end{example}

\begin{remark}
    Throughout this paper, we assume for simplicity that a module $M$ is not the zero module.
    However, the statements in this paper still hold true for $M = \{0\}$.
\end{remark}

\begin{defin}\label{def: nontrivial symplectic quandle}
    We say that a symplectic quandle is \textit{nontrivial} if it is not a trivial quandle.
\end{defin}

The structures of symplectic quandles can be characterized in terms of the characteristic of a ring $R$.

\begin{prop}[{\cite[Proposition 1]{MR2493966}}] \label{prop: kei if char2}
If $(M,*)$ is a symplectic quandle over $R$ of characteristic $2$, then $(M,*)$ is a kei.
\end{prop}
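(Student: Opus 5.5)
The plan is to compare $x*y$ with $x*^{-1}y$ directly, using the explicit formulas in the definition of a symplectic quandle and in the remark on the dual operation. By Definition \ref{def: kei}, $(M,*)$ is a kei exactly when
\begin{equation*}
x*y = x*^{-1}y \quad \text{for all } x,y \in M.
\end{equation*}
So it suffices to check this identity for arbitrary $x,y\in M$.

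Fix $x,y\in M$. From the definitions,
\begin{equation*}
x*y = x + \langle x,y\rangle y, \qquad x*^{-1}y = x - \langle x,y\rangle y.
\end{equation*}
Their difference is therefore
\begin{equation*}
(x*y)-(x*^{-1}y) = 2\langle x,y\rangle y.
\end{equation*}

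Since $R$ has characteristic $2$, we have $2=0$ in $R$. Hence $2\langle x,y\rangle = 0$ in $R$, and so $2\langle x,y\rangle y = 0$ in $M$. Equivalently, $-\langle x,y\rangle = \langle x,y\rangle$, so the two expressions coincide.

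For completeness, one could also verify that $x - \langle x,y\rangle y$ really is the dual operation. Applying $*y$ to it gives
\begin{equation*}
\bigl(x - \langle x,y\rangle y\bigr) + \langle x - \langle x,y\rangle y,\, y\rangle y = x,
\end{equation*}
because $\langle y,y\rangle=0$. However, this is already recorded in the paper's remark on the dual operation, so it need not be redone.

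There is no real obstacle here. The only care needed is to rely on the dual-operation formula as stated, and to note that characteristic $2$ means $2\cdot 1_R=0$, which kills the term $2\langle x,y\rangle y$ in the module.
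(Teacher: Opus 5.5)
Your proof is correct: the difference $x*y - x*^{-1}y = 2\langle x,y\rangle y$ vanishes because $2=0$ in $R$, and this direct computation is the standard argument for this fact. The paper does not reprove the statement; it cites Navas--Nelson, and it uses the same identity $2\langle x,y\rangle y = 0$ in its proof of Proposition~\ref{prop: kei iff char2 when integral domain}.
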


We can say about the converse of the above proposition when $R$ is an integral domain.

\begin{prop}
    Assume that $R$ is an integral domain and the characteristic of $R$ is not $2$.
    If a symplectic quandle $(M,*)$ over $R$ is a kei, then $(M,*)$ is a trivial quandle.
\end{prop}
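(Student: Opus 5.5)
The argument is a direct comparison of the operation with its dual. Suppose $(M,*)$ is a kei, so $x*y = x*^{-1}y$ for all $x,y \in M$. By the explicit formulas for $*$ and $*^{-1}$ (the remark following the definition of symplectic quandles), this says
\begin{equation*}
    x + \langle x,y \rangle y = x - \langle x,y \rangle y ,
\end{equation*}
that is, $2\langle x,y \rangle y = 0$ in $M$ for all $x,y \in M$. The goal is to conclude that $\langle x,y \rangle = 0$ for all $x,y$. Then $x*y = x$ for all $x,y$, and $(M,*)$ is the trivial quandle.

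To strip off the factor $y$, I will use that $M$ is free over the integral domain $R$. Fix $x,y$. If $y = 0$, then $\langle x,y \rangle = 0$ by bilinearity. If $y \neq 0$, write $y$ in a basis $e_1,\dots,e_n$ of $M$ and choose a coordinate $y_i \neq 0$. The $i$-th coordinate of $2\langle x,y\rangle y = 0$ gives $2\langle x,y \rangle y_i = 0$ in $R$. Since $R$ has no zero divisors and $y_i \neq 0$, it follows that $2\langle x,y \rangle = 0$.

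Next I will cancel the $2$. Because $R$ is an integral domain whose characteristic is not $2$, the element $2 = 1+1$ is nonzero. (The characteristic is $0$ or an odd prime.) Hence $2 r = 0$ forces $r = 0$, so $\langle x,y \rangle = 0$. Since $x,y$ were arbitrary, the form vanishes identically and $(M,*)$ is trivial.

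There is no serious obstacle. The only care needed is passing from the module equation $2\langle x,y \rangle y = 0$ to a scalar equation. This is where freeness (torsion-freeness) of $M$ and the integral domain hypothesis are both used, together with the fact that $2 \neq 0$ in $R$.
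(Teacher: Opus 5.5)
Your proof is correct and follows the paper's argument: from $*=*^{-1}$ you get $2\langle x,y\rangle y=0$, and then use that $R$ is an integral domain of characteristic $\neq 2$ and $M$ is free to conclude $\langle x,y\rangle=0$. The only difference is the order of cancellation. The paper removes the factor $2$ first and then the $y$, whereas you strip off $y$ via a nonzero coordinate first; this makes explicit the torsion-freeness step that the paper leaves implicit.
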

\begin{proof}
    Suppose that a symplectic quandle $(M,*)$ is a kei, i.e. $* = *^{-1}$.

    First, for any $x \in M$, we have 
    \begin{equation*}
        x*0 = x, \; x*^{-1}0 = x.
    \end{equation*}

    Next, for any $x \in M$ and $y \in M \setminus \{0\}$ we have
    \begin{equation*}
        x + \langle x,y \rangle y = x - \langle x,y \rangle y,
    \end{equation*}
    i.e. $2\langle x,y \rangle y = 0$. Since we assume that the characteristic of $R$ is not $2$, we have $\langle x,y \rangle y = 0$. Since $R$ is an integral domain and $y\neq 0$, $\langle x,y \rangle = 0$.

    From the definition of symplectic quandles, we obtain
    \begin{equation*}
        x*y = x + \langle x,y \rangle y = x
    \end{equation*}
    for any $x,y \in M$.
    Therefore, $(M,*)$ is a trivial quandle.
\end{proof}

Since a trivial quandle is a kei, we obtain the following.
\begin{cor}
    Assume that $R$ is an integral domain and the characteristic of $R$ is not $2$.
    A symplectic quandle $(M,*)$ over $R$ is a kei if and only if $(M,*)$ is a trivial quandle.
\end{cor}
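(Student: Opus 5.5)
The corollary is the preceding proposition combined with the general fact that trivial quandles are keis, so the plan is to prove the two implications separately.

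For the direction ``if'', suppose $(M,*)$ is a trivial quandle. Then $x*y = x$ for all $x,y \in M$, and the defining relation $(x*^{-1}y)*y = x$ forces $x*^{-1}y = x$ as well. Hence $* = *^{-1}$, so $(M,*)$ is a kei. This is exactly the remark in Example \ref{eg: trivial quandle} that trivial quandles are keis, and it needs no hypothesis on $R$.

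For the direction ``only if'', suppose $(M,*)$ is a kei. The hypotheses of the present statement are exactly those of the preceding proposition: $R$ is an integral domain of characteristic different from $2$. That proposition therefore applies directly and gives that $(M,*)$ is a trivial quandle.

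Its argument runs as follows. Comparing $x+\langle x,y\rangle y$ with $x-\langle x,y\rangle y$ gives $2\langle x,y\rangle y = 0$. Since $R$ has no zero divisors and $2 \neq 0$ in $R$, this yields $\langle x,y\rangle = 0$ whenever $y \neq 0$. For $y = 0$ we already have $x*0 = x$.

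There is no real obstacle. The only point needing care is the step from $2\langle x,y\rangle y = 0$ to $\langle x,y\rangle = 0$, which uses both the characteristic assumption and the integral domain assumption. We use that $M$ is free, so $ry = 0$ with $y \neq 0$ forces $r = 0$ over an integral domain. That step is already handled in the preceding proposition, so the corollary follows immediately from combining the two directions.
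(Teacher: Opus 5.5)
Your proposal is correct and follows the paper's argument: the ``only if'' direction is the preceding proposition, and the ``if'' direction is the observation that trivial quandles are keis. Your note that cancelling $2\langle x,y\rangle y=0$ requires $M$ free, $R$ an integral domain, and $2\neq 0$ is slightly more careful than the paper's own wording of that step.
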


\begin{prop} \label{prop: kei iff char2 when integral domain}
    Assume that $R$ is an integral domain. 
    A nontrivial symplectic quandle $(M,*)$ over $R$ is a kei if and only if the characteristic of $R$ is $2$.
\end{prop}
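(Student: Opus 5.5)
The plan is to prove the two directions separately, using Proposition \ref{prop: kei if char2} for one direction and the preceding Corollary (or directly the preceding Proposition) for the other.

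For the direction ``characteristic $2$ implies kei'', we simply invoke Proposition \ref{prop: kei if char2}. It applies to any symplectic quandle over a ring of characteristic $2$, so nontriviality and the integral domain hypothesis are not even needed here.

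For the converse, we argue by contraposition. Suppose the characteristic of $R$ is not $2$ and that $(M,*)$ is a kei. Since $R$ is an integral domain of characteristic different from $2$, the preceding Proposition (equivalently, the Corollary) shows that $(M,*)$ is a trivial quandle. This contradicts the assumption that $(M,*)$ is nontrivial in the sense of Definition \ref{def: nontrivial symplectic quandle}. Hence a nontrivial kei forces the characteristic of $R$ to be $2$.

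There is no real obstacle. The only point to check is that ``characteristic not $2$'' is the exact negation used in the earlier Proposition, so that the contrapositive is valid. Note also that $1\neq 0$ in an integral domain, so the characteristic is well defined and the statement ``characteristic $2$'' means $2=0$ in $R$. The substantive work, namely cancelling $2$ and then cancelling the nonzero $y$ in $2\langle x,y\rangle y=0$, was already done in the previous Proposition.
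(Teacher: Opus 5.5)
Your proof is correct and is essentially the paper's argument: the paper also obtains ``characteristic $2 \Rightarrow$ kei'' from Proposition~\ref{prop: kei if char2}. For the other direction, the paper repeats inline the computation $2\langle x,y\rangle y = 0$ from the preceding Proposition (using that $R$ is an integral domain and $(M,*)$ is nontrivial) rather than citing that Proposition by contraposition as you do, which amounts to the same reasoning.
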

\begin{proof}
    Suppose that a symplectic quandle $(M,*)$ is a kei, i.e. $* = *^{-1}$.
    Then we obtain $2\langle x,y \rangle y = 0$ for any $x,y \in M$.
    Since $R$ is an integral domain and $(M,*)$ is not a trivial quandle, the characteristic of $R$ must be $2$.

    The converse follows from Proposition \ref{prop: kei if char2}.
\end{proof}

We consider linear \textit{symplectic/anti-symplectic involutions} and the relationship with symplectic quandles.

\begin{defin}\label{def: symplectic and anti-symplectic}
    Let $M$ be a finite rank free module over $R$ equipped with an antisymmetric bilinear form $\langle, \rangle:M \times M \to R$.
    An involution $\rho: M \to M$ is a \textit{symplectic involution} if 
    \begin{equation*}
        \langle \rho(x),\rho(y) \rangle = \langle x,y \rangle 
    \end{equation*} 
    for any $x,y \in M$.
    An involution $\rho: M \to M$ is an \textit{anti-symplectic involution} if 
    \begin{equation*}
        \langle \rho(x),\rho(y) \rangle = -\langle x,y \rangle 
    \end{equation*} 
    for any $x,y \in M$.
\end{defin}
Note that the terms \textit{symplectic/anti-symplectic involutions}
are usually used when $R = \mathbb{R}$ and the antisymmetric bilinear form $\langle, \rangle:M \times M \to R$ is nondegenerate, i.e. $(M,\langle,\rangle)$ is a symplectic vector space or when $(M,\omega)$ is a symplectic manifold.

\begin{prop}\label{prop: symplectic involution is automorphism}
    Let $(M,*)$ be a symplectic quandle over $R$. An $R$-linear symplectic involution $\rho:M \to M$ is an automorphism of the symplectic quandle $(M,*)$.
\end{prop}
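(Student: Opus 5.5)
The plan is to verify directly that $\rho$ is a bijection preserving the operation $*$. Bijectivity is immediate: $\rho$ is an involution, so $\rho\circ\rho=\mathrm{id}_M$ and $\rho$ is its own inverse. It remains to show that
\begin{equation*}
    \rho(x*y) = \rho(x)*\rho(y)
\end{equation*}
for all $x,y \in M$.

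For this we expand the left-hand side using the definition of the symplectic quandle operation and the $R$-linearity of $\rho$:
\begin{equation*}
    \rho(x*y) = \rho\bigl(x + \langle x,y\rangle y\bigr) = \rho(x) + \langle x,y\rangle \rho(y).
\end{equation*}
Here additivity of $\rho$ handles the sum, and $R$-homogeneity lets us pull the scalar $\langle x,y\rangle \in R$ outside. Next we apply the symplectic property $\langle \rho(x),\rho(y)\rangle = \langle x,y\rangle$ to replace the scalar, which gives
\begin{equation*}
    \rho(x*y) = \rho(x) + \langle \rho(x),\rho(y)\rangle \rho(y) = \rho(x)*\rho(y).
\end{equation*}
This is exactly the required identity.

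There is no real obstacle. The only point that needs care is that linearity is essential for pulling the scalar $\langle x,y\rangle$ through $\rho$, which is why the statement assumes $R$-linearity rather than just the symplectic condition. One might also note, for contrast with good involutions, that an anti-symplectic linear involution would instead give $\rho(x*y) = \rho(x)*^{-1}\rho(y)$, making it an antiautomorphism. That remark is not needed for the proposition, however.
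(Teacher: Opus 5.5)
Your proof is correct and follows the paper's argument exactly: expand $\rho(x*y)$ using $R$-linearity, then replace $\langle x,y\rangle$ by $\langle \rho(x),\rho(y)\rangle$ via the symplectic condition. You also note that bijectivity follows from $\rho^2=\mathrm{id}_M$, a detail the paper leaves implicit.
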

\begin{proof}
    For any $x,y \in M$, we have 
    \begin{align*}
        \rho(x*y) &= \rho(x + \langle x,y \rangle y) \\
        &= \rho(x) + \langle x,y \rangle \rho(y) \\
        &= \rho(x) + \langle \rho(x),\rho(y) \rangle \rho(y) \\
        &= \rho(x)*\rho(y).
    \end{align*}
    Thus $\rho$ is an automorphism of $(M,*)$.
\end{proof}
\begin{cor}
    Let $(M,*)$ be a symplectic quandle over $R$.
    The group of $R$-linear symplectic involutions on $M$ is a subgroup of the quandle automorphism group of $(M,*)$.
\end{cor}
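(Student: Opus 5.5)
The statement is the Corollary: the group of $R$-linear symplectic involutions on $M$ is a subgroup of the automorphism group of $(M,*)$. I would read this as saying that the subgroup of $\mathrm{Aut}(M,*)$ generated by these involutions, or more concretely the group of $R$-linear maps preserving $\langle,\rangle$ that contains them, sits inside $\mathrm{Aut}(M,*)$.

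The first step is to observe that the computation in Proposition \ref{prop: symplectic involution is automorphism} never used $\rho^2=\mathrm{id}$. For any bijective $R$-linear map $\phi:M\to M$ with $\langle \phi(x),\phi(y)\rangle=\langle x,y\rangle$, the same chain of equalities gives
\begin{equation*}
\phi(x*y)=\phi(x)+\langle x,y\rangle\phi(y)=\phi(x)*\phi(y).
\end{equation*}
So every $R$-linear form-preserving bijection is a quandle automorphism. In particular, every $R$-linear symplectic involution lies in $\mathrm{Aut}(M,*)$ by that proposition, and it is bijective because it is an involution.

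Next I would check closure. Let $\rho,\sigma$ be $R$-linear symplectic involutions. Then $\rho\circ\sigma$ is $R$-linear and bijective, and
\begin{equation*}
\langle\rho\sigma(x),\rho\sigma(y)\rangle=\langle\sigma(x),\sigma(y)\rangle=\langle x,y\rangle.
\end{equation*}
The inverse of each generator is itself. Hence the group generated by the $R$-linear symplectic involutions consists of $R$-linear form-preserving bijections. By the first step it is contained in $\mathrm{Aut}(M,*)$, and it is a subgroup there because the group operation in both cases is composition.

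The only real obstacle is the wording: the set of involutions alone is not closed under composition. I would therefore state explicitly that the group meant is the one generated by them, or equivalently argue inside the group of $R$-linear form-preserving automorphisms. Once that is fixed, no further difficulty arises.
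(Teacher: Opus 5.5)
Your proof is correct and is essentially the paper's argument. The paper gives no separate proof and treats the corollary as immediate from Proposition~\ref{prop: symplectic involution is automorphism}, whose computation, as you note, never uses $\rho^2=\mathrm{id}$.

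Your caveat about the wording is also justified, because the set of symplectic involutions need not be closed under composition. On $\mathbb{R}^4$ with symplectic basis $e_1,f_1,e_2,f_2$, take the involution acting as $-\mathrm{id}$ on $\mathrm{span}(e_1,f_1)$ and as $\mathrm{id}$ on $\mathrm{span}(e_2,f_2)$, and compose it with the swap $e_1\leftrightarrow e_2$, $f_1\leftrightarrow f_2$; the composite squares to $-\mathrm{id}$, so it is not an involution. Reading the statement as the subgroup generated by these involutions, or as the full group of $R$-linear form-preserving automorphisms, is the right fix.
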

\begin{prop}\label{prop: self-dual}
    Let $(M,*)$ be a symplectic quandle over $R$. An $R$-linear anti-symplectic involution $\rho:M \to M$ is an antiautomorphism of the symplectic quandle $(M,*)$.
\end{prop}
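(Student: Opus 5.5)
We verify directly that $\rho$ is a rack isomorphism $(M,*) \to (M,*^{-1})$, in the same way as the proof of Proposition \ref{prop: symplectic involution is automorphism}, with the anti-symplectic sign producing the dual operation.

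First, $\rho$ is bijective, since it is an involution: $\rho\circ\rho = \mathrm{id}_M$, so $\rho$ is its own inverse. Hence it suffices to check the homomorphism condition $\rho(x*y) = \rho(x)*^{-1}\rho(y)$ for all $x,y \in M$.

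Fix $x,y \in M$. By the definition of the symplectic quandle operation and the $R$-linearity of $\rho$, we get $\rho(x*y) = \rho(x + \langle x,y\rangle y) = \rho(x) + \langle x,y\rangle \rho(y)$. Since $\rho$ is anti-symplectic, $\langle \rho(x),\rho(y)\rangle = -\langle x,y\rangle$, that is, $\langle x,y\rangle = -\langle \rho(x),\rho(y)\rangle$. Substituting gives
\begin{equation*}
\rho(x*y) = \rho(x) - \langle \rho(x),\rho(y)\rangle \rho(y) = \rho(x)*^{-1}\rho(y),
\end{equation*}
where the last equality is the formula for the dual operation $x*^{-1}y = x - \langle x,y\rangle y$ recalled in the remark after the definition of symplectic quandles. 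Thus $\rho$ is a rack isomorphism $(M,*) \to (M,*^{-1})$, i.e. an antiautomorphism. As a consequence, such an $(M,*)$ is self-dual.

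There is no real obstacle here. The only care needed is that linearity is used to pull the scalar $\langle x,y\rangle$ out of $\rho$, and that the sign convention for $*^{-1}$ matches the one in that remark.
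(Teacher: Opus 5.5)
Your proof is correct and follows the paper's argument: linearity pulls out the scalar $\langle x,y\rangle$, and the anti-symplectic identity turns the result into $\rho(x)*^{-1}\rho(y)$. Your remark that $\rho$ is bijective because it is an involution states explicitly a point the paper leaves implicit.
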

\begin{proof}
    For any $x,y \in M$, we have 
    \begin{align*}
        \rho(x*y) &= \rho(x + \langle x,y \rangle y) \\
        &= \rho(x) + \langle x,y \rangle \rho(y) \\
        &= \rho(x) - \langle \rho(x),\rho(y) \rangle \rho(y) \\
        &= \rho(x)*^{-1}\rho(y).
    \end{align*}
    Thus $\rho$ is an isomorphism between $(M,*)$ and $(M,*^{-1})$, i.e. an antiautomorphism of the symplectic quandle $(M,*)$.
\end{proof}
Since there exists a linear anti-symplectic involutions on a symplectic vector space, we obtain the following from Proposition \ref{prop: self-dual}.
\begin{cor} \label{cor: self-dual symplectic vector space}
    Let $M$ be a finite dimensional symplectic vector space.
    Then the symplectic quandle $(M,*)$ is self-dual.
\end{cor}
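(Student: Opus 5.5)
The statement to prove is Corollary \ref{cor: self-dual symplectic vector space}. By Proposition \ref{prop: self-dual}, it suffices to exhibit one $\mathbb{R}$-linear anti-symplectic involution $\rho: M \to M$. Such a $\rho$ is then an antiautomorphism of $(M,*)$, so $\mathrm{Anti}M \neq \emptyset$ and $(M,*)$ is self-dual.

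To build $\rho$, I would first use the standard structure theorem for finite dimensional symplectic vector spaces. It gives a symplectic basis $e_1,\dots,e_n,f_1,\dots,f_n$ of $M$ with
\[
\langle e_i,e_j\rangle = \langle f_i,f_j\rangle = 0, \qquad \langle e_i,f_j\rangle = \delta_{ij} = -\langle f_j,e_i\rangle .
\]
If one wants this to be self-contained, the basis comes from the usual inductive argument. Pick a nonzero $e_1$. Nondegeneracy gives $f_1$ with $\langle e_1,f_1\rangle = 1$. Then pass to the orthogonal complement of $\mathrm{span}(e_1,f_1)$, on which the form is again nondegenerate, and induct on the dimension. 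This is the only step requiring any real work, and it is textbook material, so I do not expect an obstacle. If $M = \{0\}$, the identity already works, since the form is zero.

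Next, define $\rho$ linearly by $\rho(e_i) = e_i$ and $\rho(f_i) = -f_i$. Clearly $\rho^2 = \mathrm{id}$, so $\rho$ is a linear involution. Bilinearity reduces the check $\langle \rho(x),\rho(y)\rangle = -\langle x,y\rangle$ to pairs of basis vectors:
\begin{itemize}
\item For pairs $(e_i,e_j)$ and $(f_i,f_j)$, both sides are $0$.
\item For pairs $(e_i,f_j)$, the left side is $\langle e_i,-f_j\rangle = -\delta_{ij} = -\langle e_i,f_j\rangle$.
\item Pairs $(f_j,e_i)$ follow by antisymmetry.
\end{itemize}
Hence $\rho$ is an $\mathbb{R}$-linear anti-symplectic involution in the sense of Definition \ref{def: symplectic and anti-symplectic}. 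Applying Proposition \ref{prop: self-dual} finishes the proof.
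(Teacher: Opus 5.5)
Your proposal is correct and follows the paper's route. The paper also deduces the corollary from Proposition \ref{prop: self-dual} via the existence of a linear anti-symplectic involution, but it only asserts that existence. You supply it explicitly with a symplectic basis and the map $e_i \mapsto e_i$, $f_i \mapsto -f_i$.
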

Moreover, since Albers--Frauenfelder \cite{MR3001556} showed that the space of linear anti-symplectic involutions on a symplectic vector space is diffeomorphic to the homogeneous space $\mathrm{GL}(n,\mathbb{R})\backslash \mathrm{Sp}(n)$, the antiautomorphism group of the symplectic quandles contains the homogeneous space $\mathrm{GL}(n,\mathbb{R})\backslash \mathrm{Sp}(n)$.

\section{Linear good involutions} \label{sec: linear good involution}

In this section, we show the nonexistence of linear good involutions of nontrivial symplectic quandles.
In particular, we consider linear involutions that satisfiy either one of the first or second condition of good involutions.
Precisely speaking, they can be seen like \textit{symplectic involutions} or \textit{anti-symplectic involutions} in symplectic geometry.

Throughout this section, we assume that $R$ is an integral domain and the characteristic of $R$ is not $2$.

\begin{lemma} \label{lem: linear semi-good involution}
    Let $(M,*)$ be a symplectic quandle over an integral domain $R$.
    If an $R$-linear involution $\rho:M \to M$ satisfies either one of the first or second condition of good involutions, then we have 
    \begin{equation*}
        \langle \rho(x),x \rangle = 0
    \end{equation*}
    for any $x \in M$.
\end{lemma}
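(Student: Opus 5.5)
The plan is to treat the two conditions of Definition~\ref{def: good involution} separately. In each case I would expand the defining identity using the formula $x*y = x + \langle x,y\rangle y$ (and $x*^{-1}y = x - \langle x,y\rangle y$), then substitute a well-chosen pair $(x,y)$ so that one side collapses to zero. The result should be an identity of the form $\langle \rho(y), y\rangle\,\rho(y) = 0$ in $M$.

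\emph{First condition.} Suppose $\rho(x*y)=\rho(x)*y$ for all $x,y$. By $R$-linearity of $\rho$ the left side is $\rho(x)+\langle x,y\rangle \rho(y)$, and the right side is $\rho(x)+\langle \rho(x),y\rangle y$. Hence
\begin{equation*}
\langle x,y\rangle\,\rho(y) = \langle \rho(x),y\rangle\, y \quad\text{for all } x,y\in M.
\end{equation*}
Now I substitute $x=\rho(y)$. Since $\rho$ is an involution, $\rho(x)=y$, so the right side becomes $\langle y,y\rangle y = 0$, and we get $\langle \rho(y),y\rangle\,\rho(y)=0$.

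\emph{Second condition.} Suppose $x*\rho(y)=x*^{-1}y$ for all $x,y$. Then
\begin{equation*}
\langle x,\rho(y)\rangle\,\rho(y) = -\langle x,y\rangle\, y \quad\text{for all } x,y\in M.
\end{equation*}
Here I substitute $x=y$. The right side vanishes because $\langle y,y\rangle=0$, which again gives $\langle y,\rho(y)\rangle\,\rho(y)=0$. By antisymmetry this is the same as $\langle \rho(y),y\rangle\,\rho(y)=0$ up to sign.

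\emph{Conclusion.} It remains to cancel the factor $\rho(y)$. If $y=0$, then $\langle\rho(0),0\rangle=0$ trivially. If $y\neq 0$, then $\rho(y)\neq 0$, since $\rho$ is bijective (being an involution) and linear, so $\rho(0)=0$. Because $M$ is a free module over the integral domain $R$, it is torsion-free: writing $\rho(y)$ in a basis, some coordinate $c$ is nonzero, and $\langle\rho(y),y\rangle c=0$ forces $\langle \rho(y),y\rangle = 0$.

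The only delicate points are choosing the substitutions $x=\rho(y)$ and $x=y$, and the final torsion-freeness step, which is where the integral-domain hypothesis is used. The characteristic-$\neq 2$ assumption is not needed for this lemma.
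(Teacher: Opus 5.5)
Your proof is correct and follows essentially the same strategy as the paper: in each case you specialize the condition so that one side collapses via $\langle z,z\rangle=0$, obtaining $\langle\rho(x),x\rangle v=0$ (up to sign) for a vector $v$ that is nonzero when $x\neq 0$, and then cancel using torsion-freeness of the free module $M$. The differences are only in the substitutions. For the first condition the paper just takes $y=x$ in $\rho(x*y)=\rho(x)*y$, getting $\langle\rho(x),x\rangle x=0$ without using linearity of $\rho$. For the second it takes $y=\rho(x)$ (using $\rho^2=\mathrm{id}$) instead of $x=y$, and arrives at the same identity $\langle x,\rho(x)\rangle\rho(x)=0$.
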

\begin{proof}
    Assume that $\rho$ satisfies the first condition.
    Since $x*x = x$, we have 
    \begin{equation*}
        \rho(x) = \rho(x*x) = \rho(x)*x = \rho(x) + \langle \rho(x),x \rangle x
    \end{equation*}
    for any $x \in M$, i.e. we obtain $\langle \rho(x),x \rangle x = 0$.
    Since $R$ is an integral domain and $M$ is a free $R$-module, we obtain $\langle \rho(x),x \rangle = 0$.

    Assume that $\rho$ satisfies the second condition.
    Note that since $\rho$ is linear, $\rho(0) = 0$;
    thus $\langle \rho(0),0 \rangle = 0$.
    Since $x*x = x$, we have 
    \begin{equation*}
        x = x*x = x*^{-1}\rho(x) = x - \langle x,\rho(x) \rangle \rho(x)
    \end{equation*}
    for any $x \in M$, i.e. we obtain $\langle \rho(x),x \rangle\rho(x) = 0$.
    Since $\rho(x) \neq 0$ for $x \neq 0$, we obtain $\langle \rho(x),x \rangle = 0$.
\end{proof}

\begin{prop} \label{prop: symplectic involution}
    Let $(M,*)$ be a symplectic quandle over an integral domain $R$.
    If an $R$-linear involution $\rho:M \to M$ satisfies the first condition of good involutions, then we have 
    \begin{equation} \label{eq: symplectic involution}
        \langle \rho(x),\rho(y) \rangle = \langle x,y \rangle
    \end{equation}
    for any $x,y \in M$.
\end{prop}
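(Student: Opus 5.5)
The plan is to unpack the first condition of good involutions using linearity, which turns it into a pointwise linear-dependence relation between $y$ and $\rho(y)$, and then to split into cases according to whether $y$ and $\rho(y)$ are linearly dependent. Fix $x,y \in M$. Since $\rho$ is $R$-linear, the identity $\rho(x*y)=\rho(x)*y$ reads
\begin{equation*}
\rho(x) + \langle x,y\rangle \rho(y) = \rho(x) + \langle \rho(x),y\rangle y,
\end{equation*}
that is,
\begin{equation} \label{eq: key relation plan}
\langle x,y\rangle\,\rho(y) = \langle \rho(x),y\rangle\, y \qquad \text{for all } x,y\in M.
\end{equation}
If $y=0$ there is nothing to prove, since both sides of \eqref{eq: symplectic involution} vanish. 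So assume $y\neq 0$. Let $K$ be the fraction field of $R$. Because $M$ is free of finite rank, $M$ embeds in $M\otimes_R K$, and we may speak of linear (in)dependence of $y$ and $\rho(y)$ over $K$.

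\emph{Case 1: $y$ and $\rho(y)$ are linearly independent over $K$.} Then \eqref{eq: key relation plan} forces $\langle x,y\rangle = 0$ and $\langle\rho(x),y\rangle=0$ for every $x$. Next apply \eqref{eq: key relation plan} with $y$ replaced by $\rho(y)$ and use $\rho^2=\mathrm{id}$. This gives
\begin{equation*}
\langle x,\rho(y)\rangle\, y = \langle \rho(x),\rho(y)\rangle\,\rho(y).
\end{equation*}
Independence again gives $\langle\rho(x),\rho(y)\rangle = 0$. Hence $\langle\rho(x),\rho(y)\rangle = 0 = \langle x,y\rangle$.

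\emph{Case 2: $y$ and $\rho(y)$ are linearly dependent over $K$.} Since $y\neq 0$ and $\rho$ is injective (being an involution), we have $\rho(y)\ne 0$, so $\rho(y)=\lambda y$ for some $\lambda\in K^{\times}$. Applying $\rho$ (extended $K$-linearly) and using $\rho^2=\mathrm{id}$ gives $y=\lambda^2 y$, hence $\lambda^2=1$. Since $K$ is a field, $\lambda=\pm 1\in R$, so $\rho(y)=\lambda y$ holds in $M$ itself. Substituting into \eqref{eq: key relation plan} gives $\lambda\langle x,y\rangle y = \langle\rho(x),y\rangle y$. Since $M$ is torsion-free and $y\neq0$, this yields $\langle\rho(x),y\rangle=\lambda\langle x,y\rangle$. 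Therefore
\begin{equation*}
\langle\rho(x),\rho(y)\rangle=\lambda\langle\rho(x),y\rangle=\lambda^2\langle x,y\rangle=\langle x,y\rangle.
\end{equation*}

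The main point needing care is the dichotomy in Case 2. We must pass to $K$ to make sense of dependence, and then justify that the scalar $\lambda$ actually lies in $R$ and that we may cancel $y$; both steps rest on $R$ being an integral domain and $M$ being free. Lemma \ref{lem: linear semi-good involution} is not needed here, although it is consistent with the relation: taking $x=y$ in \eqref{eq: key relation plan} recovers $\langle \rho(y),y\rangle = 0$.
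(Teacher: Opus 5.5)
Your proof is correct, but it takes a different route from the paper.

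The paper argues by polarization. Lemma \ref{lem: linear semi-good involution} uses the first condition only at pairs $(z,z)$, via $z*z=z$, to get $\langle\rho(z),z\rangle=0$ for all $z$. Substituting $z=\rho(x)+y$ and expanding with $\rho^2=\mathrm{id}$ and antisymmetry leaves $\langle x,y\rangle+\langle\rho(y),\rho(x)\rangle=0$. That takes a few lines, with no passage to $\mathrm{Frac}(R)$ and no case split.

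You instead use the first condition at arbitrary pairs to get $\langle x,y\rangle\rho(y)=\langle\rho(x),y\rangle y$. You then split on whether $\rho(y)$ is proportional to $y$ over $K$. Both cases are sound: the embedding $M\hookrightarrow M\otimes_R K$, the conclusion $\lambda=\pm1\in R$, and the cancellation of $y$ are all justified by $R$ being a domain and $M$ free.

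The price is length. What you gain is a finer description of $\rho$. Every $y$ with $\langle -,y\rangle\neq 0$ satisfies $\rho(y)=\pm y$. Otherwise the functionals $\langle -,y\rangle$ and $\langle -,\rho(y)\rangle$ both vanish identically. Your argument also never uses antisymmetry. This description makes Theorem \ref{thm: linear good involution} essentially immediate: $\rho(y)=\pm y$ turns the second condition into $2\langle x,y\rangle y=0$.
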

\begin{proof}
    From Lemma \ref{lem: linear semi-good involution}, we have 
    \begin{equation*}
        \langle \rho(\rho(x)+ y), \rho(x)+y \rangle = 0.
    \end{equation*}
    Since $\rho$ is $R$-linear, we have 
    \begin{align*}
        \langle \rho(\rho(x)+ y), \rho(x)+y \rangle 
        &= 
        \langle x+\rho(y), \rho(x)+y \rangle \\
        &=
        \langle x,\rho(x) \rangle + \langle x,y \rangle + \langle \rho(y),\rho(x) \rangle + \langle \rho(y),y \rangle \\
        &=
        \langle x,y \rangle + \langle \rho(y),\rho(x) \rangle.
    \end{align*}
    Thus we obtain Equation \eqref{eq: symplectic involution}.
\end{proof}

When we see the bilinear form $\langle, \rangle$ as a (pre)symplectic form and $R = \mathbb{R}$, involutions that satisfy Equation \eqref{eq: symplectic involution} are called symplectic involutions in the context of symplectic geometry.

\begin{prop} \label{prop: anti-symplectic involution}
    Let $(M,*)$ be a symplectic quandle over an integral domain $R$.
    If an $R$-linear involution $\rho:M \to M$ satisfies both the first and second condition of good involutions, then we have 
    \begin{equation} \label{eq: anti-symplectic involution}
        \langle \rho(x),\rho(y) \rangle = -\langle x,y \rangle
    \end{equation}
    for any $x,y \in M$.
\end{prop}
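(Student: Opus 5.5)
The plan is to expand the two defining conditions of a good involution using the formula $x*y = x + \langle x,y\rangle y$, which gives two vector identities, and then to combine them. The $R$-linearity of $\rho$ and the hypothesis that $R$ is an integral domain let us cancel a common nonzero vector.

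First I will extract a scalar identity from the first condition. By linearity of $\rho$, the equality $\rho(x*y)=\rho(x)*y$ reads
\begin{equation*}
\rho(x) + \langle x,y\rangle \rho(y) = \rho(x) + \langle \rho(x),y\rangle y ,
\end{equation*}
so $\langle x,y\rangle \rho(y) = \langle \rho(x),y\rangle y$ for all $x,y\in M$. Replacing $x$ by $\rho(x)$ and using $\rho^2=\mathrm{id}$, this becomes
\begin{equation*}
\langle \rho(x),y\rangle \rho(y) = \langle x,y\rangle y .
\end{equation*}

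Next I will expand the second condition. The equality $x*\rho(y) = x*^{-1}y$ reads
\begin{equation*}
x + \langle x,\rho(y)\rangle \rho(y) = x - \langle x,y\rangle y ,
\end{equation*}
that is, $\langle x,\rho(y)\rangle \rho(y) = -\langle x,y\rangle y$ for all $x,y$. Substituting $\rho(x)$ for $x$ gives
\begin{equation*}
\langle \rho(x),\rho(y)\rangle \rho(y) = -\langle \rho(x),y\rangle y .
\end{equation*}
The right-hand side $\langle \rho(x),y\rangle y$ can be rewritten by the unsubstituted first identity (with the roles read as $\langle \rho(x),y\rangle y = \langle x,y\rangle\rho(y)$). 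This yields
\begin{equation*}
\bigl(\langle \rho(x),\rho(y)\rangle + \langle x,y\rangle\bigr)\rho(y) = 0 .
\end{equation*}

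Finally I will cancel $\rho(y)$. If $y=0$, both sides of \eqref{eq: anti-symplectic involution} vanish. If $y\neq 0$, then $\rho(y)\neq 0$ because $\rho$ is injective. Since $M$ is free over the integral domain $R$, a scalar annihilating a nonzero vector must be zero, exactly as in the proof of Lemma \ref{lem: linear semi-good involution}. Hence $\langle \rho(x),\rho(y)\rangle = -\langle x,y\rangle$.

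The only delicate point is choosing the right substitutions so that both identities end up multiplying the same vector $\rho(y)$; once that is arranged, the argument is bookkeeping. As a consistency check, combining this with Proposition \ref{prop: symplectic involution} gives $2\langle x,y\rangle=0$, which is precisely the mechanism expected for Theorem \ref{thm: 1}.
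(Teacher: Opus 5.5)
Your proof is correct and is essentially the paper's argument: both use linearity to combine the two conditions into $(\langle\rho(x),\rho(y)\rangle+\langle x,y\rangle)v=0$ and then cancel $v$ by torsion-freeness of a free module over a domain. The differences are cosmetic: the paper first composes the conditions as $x*y=\rho(\rho(x)*^{-1}\rho(y))$ and cancels $v=y$, whereas you expand each condition separately and cancel $v=\rho(y)$. Your substituted identity $\langle\rho(x),y\rangle\rho(y)=\langle x,y\rangle y$ is never used and can be dropped, and your explicit cancellation step spells out what the paper leaves implicit.
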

\begin{proof}
    For any $x,y \in M$, we have 
    \begin{align*}
        x*y &= \rho(\rho(x)*y) \\
        &= \rho(\rho(x)*^{-1}\rho(y))\\
        &= \rho(\rho(x) - \langle \rho(x),\rho(y) \rangle \rho(y))\\
        &= x - \langle \rho(x),\rho(y) \rangle y,
    \end{align*}
    i.e. we obtain Equation \eqref{eq: anti-symplectic involution}.
\end{proof}

Involutions that satisfy Equation \eqref{eq: anti-symplectic involution} are called anti-symplectic involutions in the context of symplectic geometry.

From Proposition \ref{prop: symplectic involution} and Proposition \ref{prop: anti-symplectic involution}, we can say that if an $R$-linear involution $\rho:M \to M$ satisfies only the first condition of good involutions, then $\rho$ is a symplectic involution; on the other hand, if $\rho$ also satisfies the second condition, i.e. $\rho$ is a good involution, then $\rho$ is also an anti-symplectic involution.

\begin{thm} \label{thm: linear good involution}
    Let $(M,*)$ be a symplectic quandle over an integral domain $R$.
    If the characteristic of $R$ is not $2$ and there exists an $R$-linear good involution $\rho:M \to M$, then $(M,*)$ is a trivial quandle.
\end{thm}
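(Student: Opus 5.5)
The plan is to combine Proposition \ref{prop: symplectic involution} with Proposition \ref{prop: anti-symplectic involution}. A good involution satisfies both conditions of Definition \ref{def: good involution}, so both propositions apply to $\rho$.

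Proposition \ref{prop: symplectic involution} uses only the first condition and gives $\langle \rho(x),\rho(y)\rangle = \langle x,y\rangle$ for all $x,y\in M$. Proposition \ref{prop: anti-symplectic involution} uses both conditions and gives $\langle \rho(x),\rho(y)\rangle = -\langle x,y\rangle$. Subtracting the two identities yields $2\langle x,y\rangle = 0$ for all $x,y \in M$.

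Since $R$ is an integral domain of characteristic not $2$, the element $2$ is a nonzero element of $R$ and hence not a zero divisor. Therefore $\langle x,y\rangle = 0$ for all $x,y$. It follows that $x*y = x + \langle x,y\rangle y = x$, so $(M,*)$ is the trivial quandle.

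No step is a real obstacle, since the work was done in the two earlier propositions. The one point to check is that those propositions apply as stated: both are formulated for an arbitrary symplectic quandle over an integral domain with an $R$-linear involution, which is exactly our setting.
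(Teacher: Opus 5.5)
Your proof is correct and essentially identical to the paper's. Like the paper, you combine Proposition~\ref{prop: symplectic involution} and Proposition~\ref{prop: anti-symplectic involution} to get $\langle x,y\rangle = -\langle x,y\rangle$ and then cancel $2$; your remark that $2$ is a nonzero element, hence a non-zero-divisor, in the integral domain $R$ makes explicit a step the paper leaves implicit.
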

\begin{proof}
    Let $\rho:M \to M$ be an $R$-linear good involution. 
    By Proposition \ref{prop: symplectic involution} and Proposition \ref{prop: anti-symplectic involution}, we have
    \begin{equation*}
        \langle x,y \rangle = \langle \rho(x), \rho(y) \rangle 
        = - \langle x,y \rangle 
    \end{equation*}
    for any $x,y \in M$. 
    Since the characteristic of $R$ is not 2, we obtain $\langle x,y \rangle = 0$ for any $x,y \in M$.
    Therefore, $(M,*)$ is a trivial quandle (see Example \ref{eg: trivial quandle}).
\end{proof}
We show the first main result (Theorem \ref{thm: 1}).
\begin{cor} \label{cor: main 1}
    Let $(M,*)$ be a nontrivial symplectic quandle over an integral domain $R$.
    Then the following are equivalent.
    \begin{itemize}
        \item There exists an $R$-linear good involution of $(M,*)$.
        \item $(M,*)$ is a kei.
        \item The characteristic of $R$ is 2.
    \end{itemize}
\end{cor}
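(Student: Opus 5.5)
The plan is to prove the three equivalences by closing a cycle of implications, all from earlier results. The equivalence of the second and third items (kei if and only if the characteristic of $R$ is $2$) for a nontrivial symplectic quandle over an integral domain is exactly Proposition \ref{prop: kei iff char2 when integral domain}. So the work reduces to linking the first item to these two.

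For ``characteristic $2$ implies existence of an $R$-linear good involution'', I will use Proposition \ref{prop: kei if char2}, which says $(M,*)$ is a kei. Then Example \ref{eg: good involution of kei} shows the identity map $\mathrm{id}_M$ is a good involution. The identity is trivially $R$-linear, which gives the first item.

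For ``existence of an $R$-linear good involution implies characteristic $2$'', I argue by contraposition using Theorem \ref{thm: linear good involution}. If the characteristic of $R$ were not $2$ and an $R$-linear good involution existed, that theorem would force $(M,*)$ to be a trivial quandle. This contradicts the standing hypothesis that $(M,*)$ is nontrivial, so the characteristic of $R$ must be $2$.

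I expect no real obstacle; the only care needed is bookkeeping of hypotheses. Section \ref{sec: linear good involution} carries a blanket assumption that the characteristic is not $2$, but Theorem \ref{thm: linear good involution} states that assumption explicitly. Lemma \ref{lem: linear semi-good involution} and the propositions feeding into it use only that $R$ is an integral domain and $M$ is free. So invoking the theorem inside the contrapositive is legitimate, and the nontriviality assumption is used exactly there and in Proposition \ref{prop: kei iff char2 when integral domain}.
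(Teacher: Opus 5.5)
Your proposal is correct and follows the paper's proof: you get the second and third items from Proposition~\ref{prop: kei iff char2 when integral domain}, use $\mathrm{id}_M$ as the $R$-linear good involution in characteristic $2$, and get the remaining implication from Theorem~\ref{thm: linear good involution} together with nontriviality. You are also right that the section's blanket ``characteristic not $2$'' assumption is harmless here, because Theorem~\ref{thm: linear good involution} states that hypothesis explicitly.
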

\begin{proof}
    By Proposition \ref{prop: kei iff char2 when integral domain},
    $(M,*)$ is a kei if and only if the characteristic of $R$ is 2.
    In particular, if $(M,*)$ is a kei, the identity map on $M$ is an $R$-linear good involution.

    By Theorem \ref{thm: linear good involution}, we see that if there exists an $R$-linear good involution of a nontrivial symplectic quandle $(M,*)$, then the characteristic of $R$ is 2.
\end{proof}

We can say that symplectic quandles are examples of self-dual quandles that has no $R$-linear good involutions.

Moreover, we give an example of a symplectic quandle that is not a kei but has a good involution that is not $R$-linear.

\begin{example}\label{eg: good involution}
    Let $\mathbb{Z}[i] = \{a + bi \mid a,b \in \mathbb{Z}\}$ denote the Gaussian integers.
    Let $M = (\mathbb{Z}[i])^{2}$ be a rank two free module over $R \coloneqq \mathbb{Z}[i]$.
    Define an antisymmetric bilinear form $\langle, \rangle:M \times M \to \mathbb{Z}[i]$ by 
    \begin{equation*}
        \langle (a,b),(c,d) \rangle \coloneqq 3(ad - bc).
    \end{equation*}
    This bilinear form is nondegenerate. Note that since $3$ is not an invertible element in $R$, the bilinear form is not unimoduler.
    Then $(M,\langle,\rangle)$ defines a symplectic quandle $(M,*)$.

    This symplectic quandle $(M,*)$ has a good involution.
\end{example}
\begin{proof}
    For any nonzero element $x \in M \setminus \{0\}$, define the number $v(x)$ by 
    \begin{equation*}
        v(x) \coloneqq \max \{n\geq 0 \mid x \in 3^{n}M \},
    \end{equation*}
    where $3^{n}M \coloneqq \{(3^n a,3^n b) \mid (a,b) \in \mathbb{Z}[i]\}$ is a submodule of $M$.
    The number $v(x)$ is well-defined. 
    In fact, since the ring $\mathbb{Z}[i]$ is a Euclidean domain and the element $3$ is prime in $\mathbb{Z}[i]$, every nonzero element in $\mathbb{Z}[i]$ is divided by $3$ at most finitely many times.
    Thus $v(x)$ is well-defined for every $x \in M \setminus \{0\}$.

    For any nonzero $x \in M \setminus \{0\}$, there exists $u \in M \setminus 3M$ such that $x = 3^{v(x)}u$.
    Let $\bar{x}$ denote the image of $u$ under the projection $M \to M/3M$.
    Note that $\bar{x}$ does not coincide with the residue of $x$ in $M \to M/3M$.

    By the Homomorphism theorem for modules, we obtain 
    \begin{equation*}
        M/3M \cong (\mathbb{Z}[i]/3\mathbb{Z}[i])^{2}.
    \end{equation*}
    Now since we have $\mathbb{Z}[i] \cong \mathbb{Z}[X]/(X^2 +1)$, we obtain 
    \begin{equation*}
        \mathbb{Z}[i]/3\mathbb{Z}[i] 
        \cong
        (\mathbb{Z}/3\mathbb{Z})[X]/(X^2 +1)
        = 
        \mathbb{F}_{3}[X]/(X^2 +1).
    \end{equation*}
    Here, $\mathbb{F}_{3}$ is the finite field of order $3$.
    Since there is no root in $\mathbb{F}_{3}$ of the polynomial $X^2 +1$, $X^2 +1$ is irreducible.
    Thus $\mathbb{F}_{3}[X]/(X^2 +1)$ is a quadratic extension of $\mathbb{F}_{3}$ and is isomorphic to $\mathbb{F}_{9}$, i.e. $M/3M \cong (\mathbb{F}_{9})^{2}$.

    Let $i$ denote the image of $i \in \mathbb{Z}[i]$ under the projection $\mathbb{Z}[i] \to \mathbb{Z}[i]/3\mathbb{Z}[i] \cong \mathbb{F}_{9}$. We still have $i^2 = -1$ in $\mathbb{F}_{9}$.

    For $a \in \mathbb{F}_{9}$, define a linear map $f_{a}:M/3M \to M/3M$ by $f_{a}(x) \coloneqq ax$.
    Since $1,-1,i,-i \in \mathbb{F}_{9}$ are units, the maps $f_1, f_{-1}, f_{i}, f_{-i}$ induce an action on $M/3M \cong (\mathbb{F}_{9})^{2}$. Moreover, the orbit of $v \in (M/3M) \setminus \{0\}$ is the set $\{v,-v,iv,-iv\} \subset M/3M \cong (\mathbb{F}_{9})^{2}$.
    
    Fix an element $s$ in each nonzero orbit. Then the orbit is the four points set $\{s,-s,is,-is\}$.
    Define a map $\sigma: (M/3M) \setminus \{0\} \to \{i,-i\} \subset \mathbb{Z}[i]$ by 
    \begin{equation*}
        \sigma(s) \coloneqq i, \; 
        \sigma(-s) \coloneqq -i, \; 
        \sigma(is) \coloneqq -i, \; 
        \sigma(-is) \coloneqq i.
    \end{equation*}
    Then for any $v \in (M/3M) \setminus \{0\}$ we have 
    \begin{equation} \label{eq: involutive}
        \sigma(v)^2 = -1
    \end{equation}
    and 
    \begin{equation} \label{eq: sigma map}
        \sigma(\sigma(v)v) = \sigma(v)^{-1}.
    \end{equation}
    Indeed, if $\sigma(v) = i$, then 
    \begin{equation*}
        \sigma(\sigma(v)v) = \sigma(iv) = -i = i^{-1} = \sigma(v)^{-1}.
    \end{equation*}
    If $\sigma(v) = -i$, then 
    \begin{equation*}
        \sigma(\sigma(v)v) = \sigma(-iv) = i = (-i)^{-1} = \sigma(v)^{-1}.
    \end{equation*}
    Thus we obtain Equation \eqref{eq: sigma map}.
    Equation \eqref{eq: involutive} is obvious.

    Define a map $\rho:M \to M$ by 
    \begin{equation*}
        \rho(x) \coloneqq \begin{cases*}
            \sigma(\bar{x})x & if $x \neq 0$, \\
            0 & if $x = 0$.
        \end{cases*}
    \end{equation*}
    We will show that this map $\rho$ is a good involution of the symplectic quandle $(M,*)$.

    First, we show that $\rho$ is an involution.
    For $x = 0$, we have 
    $\rho^2(0) = \rho(0) = 0$.
    Since $\sigma(\bar{x})$ is a unit in $\mathbb{Z}[i]$ for $x \neq 0$, we see that $v(\sigma(\bar{x})x) = v(x)$.
    Thus, $\overline{\rho(x)} = \sigma(\bar{x})\bar{x}$.
    By Equation \eqref{eq: sigma map}, we have 
    \begin{equation*}
        \rho^2(x) = \rho(\sigma(\bar{x})x) 
        = \sigma(\sigma(\bar{x})\bar{x})\sigma(\bar{x})x
        = \sigma(\bar{x})^{-1}\sigma(\bar{x})x
        = x,
    \end{equation*}
    i.e. $\rho^2(x) = x$ for any $x$. 

    Second, we show that $\rho(x*y) = \rho(x)*y$ for any $x,y \in M$.
    If $x = 0$, then $\rho(0*y) = 0 = \rho(0)*y$.
    If $x = (x_1,x_2) \neq 0$, then for any $y =(y_1,y_2) \in M$ we have 
    \begin{align*}
        x*y &= x + \langle x,y \rangle y \\
        &= x + 3 (x_1 y_2 - x_2 y_1) y \\
        &= 3^{v(x)}u + 3 (3^{v(x)}u_1 y_2 - 3^{v(x)}u_2 y_1) y \\
        &= 3^{v(x)} (u + 3 (u_1 y_2 - u_2 y_1) y).
    \end{align*}
    Since $u + 3 (u_1 y_2 - u_2 y_1) y = u$ in $M/3M$ and $u \neq 0$, we obtain $u + 3 (u_1 y_2 - u_2 y_1) y \neq 0$.
    Moreover, $v(x*y) = v(x)$ and $\overline{x*y} = \bar{x} = u$.
    Thus, we have 
    \begin{align*}
        \rho(x)*y &= (\sigma(\bar{x})x)*y \\
        &= \sigma(\bar{x})x + \langle \sigma(\bar{x})x,y \rangle y \\
        &= \sigma(\bar{x}) (x*y) \\
        &= \sigma(\overline{x*y}) (x*y) \\
        &= \rho(x*y),
    \end{align*}
    i.e. we obtain $\rho(x*y) = \rho(x)*y$ for any $x,y \in M$.

    Finally, we show that $x*\rho(y) = x*^{-1}y$ for any $x,y \in M$.
    If $y = 0$, then $x*\rho(0) = x*0 = x = x*^{-1}0$.
    If $y \neq 0$, then we have 
    \begin{align*}
        x * \rho(y) &= x + \langle x,\rho(y) \rangle \rho(y) \\
        &= x + \langle x, \sigma(\bar{y})y \rangle \sigma(\bar{y})y \\
        &= x + \sigma(\bar{y})^2 \langle x, y \rangle y \\
        &= x - \langle x,y \rangle y \\
        &= x*^{-1}y,
    \end{align*}
    i.e. we obtain $x*\rho(y) = x*^{-1}y$ for any $x,y \in M$.

    Therefore, the map $\rho:M \to M$ is a good involution of $(M,*)$.
\end{proof}

\section{Good involutions when the characteristic is 2} \label{sec: good involution char 2}

We have seen in Proposition \ref{prop: kei iff char2 when integral domain} that if $R$ is an integral domain and the characteristic of $R$ is $2$, then any symplectic quandle $(M,*)$ over $R$ is a kei.
In this case, the identity map $\mathrm{id}_{M}$ on $M$ is a good involution of the symplectic quandle $(M,*)$.
In this section, we discuss the nonexistence of good involutions of such symplectic quandles other than the identity map.
In particular, we show that if an antisymmetric bilinear form $\langle, \rangle: M \times M \to R$ on $M$ is nondegenerate, then the set of all good involutions on $(M,*)$ is $\{\mathrm{id}_{M}\}$.

Note that the main result (Theorem \ref{thm: main 2}) is obtained by showing that nondegenerate symplectic quandles are faithful and using the Ta's result \cite[Corollary 5.5]{ta2025goodinvolutionsconjugationsubquandles}.
Our proof is based on the discussions on the module structures of symplectic quandles.

\begin{lemma} \label{lem: good involution sends zero to zero char2}
    Assume $R$ is an integral domain and the characteristic of $R$ is $2$.
    Let $(M,*)$ be a symplectic quandle over $R$.
    If an antisymmetric bilinear form $\langle, \rangle: M \times M \to R$ on $M$ is nondegenerate, then $\rho(0) = 0$ for any good involution $\rho:M \to M$ of $(M,*)$
\end{lemma}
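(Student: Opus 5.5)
The plan is to exploit the first good-involution condition, $\rho(x*y)=\rho(x)*y$, with $x=0$. Since $0*y = 0 + \langle 0,y\rangle y = 0$ for every $y\in M$, this gives
\begin{equation*}
    \rho(0) = \rho(0*y) = \rho(0)*y = \rho(0) + \langle \rho(0),y\rangle y
\end{equation*}
for all $y \in M$. Writing $z \coloneqq \rho(0)$, we obtain $\langle z,y\rangle y = 0$ for every $y\in M$.

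The next step is to pass from $\langle z,y\rangle y=0$ to $\langle z,y\rangle=0$. For $y\neq 0$ in the free module $M$, some coordinate of $y$ is a nonzero element of $R$. Since $R$ is an integral domain, $\langle z,y\rangle y = 0$ then forces $\langle z,y\rangle = 0$; this is the same argument already used in Lemma \ref{lem: linear semi-good involution}. For $y=0$ the equality $\langle z,0\rangle = 0$ is trivial. Hence $\langle z,-\rangle$ is the zero functional on $M$, so $z$ and $0$ have the same image under $M\to M^{\vee}$, $x\mapsto\langle x,-\rangle$. Nondegeneracy says this map is injective, so $z=0$, that is, $\rho(0)=0$.

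The argument does not really use the characteristic $2$ hypothesis, nor the second condition of good involutions; only the first condition, the integral domain hypothesis, and nondegeneracy enter. The only point requiring care is the cancellation of $y$ in $\langle z,y\rangle y=0$, which relies on freeness of $M$ together with $R$ having no zero divisors. I do not expect any genuine obstacle.
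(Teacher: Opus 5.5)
Your proof is correct, but it uses the other axiom of a good involution than the paper does.

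The paper works with the second condition. In characteristic $2$ that condition reads $\langle x,\rho(y)\rangle\rho(y)=\langle x,y\rangle y$, and setting $y=0$ gives $\langle x,\rho(0)\rangle\rho(0)=0$ for all $x$. If $\rho(0)\neq 0$, nondegeneracy supplies $x$ with $\langle x,\rho(0)\rangle\neq 0$, and torsion-freeness of the free module over the domain $R$ gives a contradiction.

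So the paper shows that $\rho(0)$ acts trivially ($s_{\rho(0)}=\mathrm{id}$), whereas you show that $\rho(0)$, like $0$, is fixed by every $s_y$. Both facts put $\rho(0)$ in the radical of the form, and nondegeneracy finishes. The ingredients are the same: nondegeneracy, plus the fact that a nonzero scalar times a nonzero vector is nonzero. Your remark about characteristic $2$ applies to the paper's argument too, since at $y=0$ the sign in $-\langle x,0\rangle 0$ is irrelevant.

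What each route buys is modest.
\begin{itemize}
\item The paper's version reuses the displayed identity from the second condition, which also drives the next lemma and the main theorem of that section. It applies to any map satisfying the second condition alone.
\item Your version applies to any map satisfying the first condition alone. It also uses the definition of nondegeneracy (injectivity of $x\mapsto\langle x,-\rangle$) directly. The paper needs antisymmetry to pass from $\langle \rho(0),z\rangle\neq 0$ to $\langle z,\rho(0)\rangle\neq 0$.
\end{itemize}
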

\begin{proof}
    Let $\rho:M \to M$ be a good involution of $(M,*)$.
    By the second condition of good involutions, we have 
    \begin{equation} \label{eq: good involution 2nd when char 2}
        \langle x,\rho(y) \rangle \rho(y) = \langle x,y \rangle y
    \end{equation}
    for any $x,y \in M$.

    Now we show the statement.
    Assume on the contrary that $\rho(0) \neq 0$.
    Then by the nondegeneracy of the bilinear form $\langle, \rangle$ on $M$,
    there exists $z \in M$ such that $\langle z,\rho(0) \rangle \neq 0$.
    From Equation \eqref{eq: good involution 2nd when char 2}, we obtain
    \begin{equation*}
        \langle z,\rho(0) \rangle \rho(0) = \langle z,0 \rangle 0 = 0.
    \end{equation*}
    Since $R$ is an integral domain, this is a contradiction to the assumption that $\rho(0) \neq 0$.
\end{proof}

\begin{lemma} \label{lem: good involution implies expansion char2}
    Assume $R$ is an integral domain and the characteristic of $R$ is $2$.
    Let $(M,*)$ be a symplectic quandle over $R$.
    If an antisymmetric bilinear form $\langle, \rangle: M \times M \to R$ on $M$ is nondegenerate, then for any good involution $\rho:M \to M$ of $(M,*)$ and any nonzero element $y \in M$, there exists $\lambda$ in the quotient field $\mathrm{Frac}(R)$ of $R$ such that $\rho(y) = \lambda y$. 
\end{lemma}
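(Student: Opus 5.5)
We start from Equation \eqref{eq: good involution 2nd when char 2}, which comes from the second condition of good involutions in characteristic $2$:
\begin{equation*}
    \langle x,\rho(y) \rangle \rho(y) = \langle x,y \rangle y \quad \text{for all } x,y \in M.
\end{equation*}
Fix a nonzero $y \in M$. We will exhibit a single $x$ for which both coefficients in this identity are nonzero, and then read off proportionality of $\rho(y)$ and $y$ over $\mathrm{Frac}(R)$.

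First, $\rho(y) \neq 0$. Since $\rho$ is an involution, it is injective, and Lemma \ref{lem: good involution sends zero to zero char2} gives $\rho(0) = 0$. Hence $y \neq 0$ forces $\rho(y) \neq 0$. Next, nondegeneracy of $\langle, \rangle$ (injectivity of $x \mapsto \langle x,-\rangle$, together with antisymmetry so that either slot may be used) provides some $x_0 \in M$ with $\langle x_0, y \rangle \neq 0$. Substituting $x = x_0$ into the identity gives
\begin{equation*}
    \langle x_0,\rho(y) \rangle \rho(y) = \langle x_0,y \rangle y.
\end{equation*}
The right-hand side is nonzero, because $M$ is free over the integral domain $R$ and so torsion-free. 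It follows that the scalar $\mu \coloneqq \langle x_0,\rho(y)\rangle$ is nonzero as well.

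Now set $c \coloneqq \langle x_0,y\rangle \neq 0$, so that $\mu\,\rho(y) = c\,y$ in $M$. Embed $M \cong R^n$ into $\mathrm{Frac}(R)^n$ and divide by $\mu$ to obtain $\rho(y) = \lambda y$ with $\lambda \coloneqq c/\mu \in \mathrm{Frac}(R)$. Here $\lambda y$ is interpreted inside $\mathrm{Frac}(R)^n$, and it happens to lie in $M$. This $\lambda$ is independent of the choice of $x_0$: since $y \neq 0$, the equation $\rho(y) = \lambda y$ determines $\lambda$ uniquely.

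The only delicate point is making sure the chosen $x_0$ makes \emph{both} sides nonzero. This is exactly why we pick $x_0$ through $y$ rather than through $\rho(y)$, and then deduce $\mu \neq 0$ from torsion-freeness. Everything else is routine.
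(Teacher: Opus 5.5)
Your proof is correct and follows essentially the same route as the paper. Both arguments choose $x$ with $\langle x,y\rangle \neq 0$ by nondegeneracy, use the characteristic-$2$ form of the second good-involution condition to get $\langle x,\rho(y)\rangle \neq 0$, and then divide in $\mathrm{Frac}(R)$; your preliminary step showing $\rho(y)\neq 0$ via Lemma~\ref{lem: good involution sends zero to zero char2} is harmless but unnecessary, since this already follows from $\langle x_0,y\rangle y \neq 0$.
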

\begin{proof}
    Since the antisymmetric bilinear form $\langle, \rangle$ on $M$ is nondegenerate, we say that for any $y \in M \setminus \{0\}$, there exists $x \in M\setminus \{0\}$ such that $\langle x,y \rangle \neq 0$.
    From Equation \eqref{eq: good involution 2nd when char 2}, we obtain 
    \begin{equation*}
        \langle x, \rho(y) \rangle \rho(y) = \langle x,y \rangle y \neq 0,
    \end{equation*}
    i.e. $\langle x, \rho(y) \rangle \neq 0$ for such $x,y \in M\setminus \{0\}$.

    Since $R$ is an integral domain, there exists an inverse element of $\langle x, \rho(y) \rangle$ in the quotient field $\mathrm{Frac}(R)$ of $R$. Thus from Equation \eqref{eq: good involution 2nd when char 2} we obtain 
    \begin{equation*}
        \rho(y) = \langle x, \rho(y) \rangle^{-1} \langle x,y \rangle y,
    \end{equation*}
    i.e. there exists $\lambda \in \mathrm{Frac}(R)$ such that $\rho(y) = \lambda y$.
\end{proof}
We show the second main result (Theorem \ref{thm: 2}).
\begin{thm} \label{thm: main 2}
    Assume $R$ is an integral domain and the characteristic of $R$ is $2$.
    Let $(M,*)$ be a symplectic quandle over $R$.
    If an antisymmetric bilinear form $\langle, \rangle: M \times M \to R$ on $M$ is nondegenerate, then the set of all good involutions on $(M,*)$ is $\{\mathrm{id}_{M}\}$.
\end{thm}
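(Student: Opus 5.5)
We use Lemma \ref{lem: good involution sends zero to zero char2} and Lemma \ref{lem: good involution implies expansion char2} to reduce $\rho$ to a scalar rescaling on each nonzero vector, and then pin the scalar down to $1$. The identity is a good involution because $(M,*)$ is a kei (Proposition \ref{prop: kei if char2}, Example \ref{eg: good involution of kei}), so only uniqueness needs proof. Let $\rho$ be a good involution. By Lemma \ref{lem: good involution sends zero to zero char2}, $\rho(0)=0$. For each $y\neq 0$, Lemma \ref{lem: good involution implies expansion char2} gives $\lambda_y\in\mathrm{Frac}(R)$ with $\rho(y)=\lambda_y y$. We must show $\lambda_y=1$.

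First, substitute $\rho(y)=\lambda_y y$ into Equation \eqref{eq: good involution 2nd when char 2}. By bilinearity, extended to $M\otimes\mathrm{Frac}(R)$ or handled by clearing denominators, we get
\begin{equation*}
\lambda_y^2\langle x,y\rangle y=\langle x,y\rangle y .
\end{equation*}
Choose $x$ with $\langle x,y\rangle\neq 0$, which nondegeneracy allows. Since $M$ is free and $R$ is a domain, this yields $\lambda_y^2=1$. In characteristic $2$ this means $(\lambda_y-1)^2=0$ in the field $\mathrm{Frac}(R)$, so $\lambda_y=1$. Hence $\rho(y)=y$ for all $y\neq 0$, and together with $\rho(0)=0$ we get $\rho=\mathrm{id}_M$.

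The only delicate point is justifying the cancellation when $\lambda_y$ lies in $\mathrm{Frac}(R)$ rather than $R$. To handle it, write $\lambda_y=a/b$ with $a,b\in R$ and $b\neq 0$, so that $b\rho(y)=ay$ holds in $M$. Multiplying the displayed identity through by $b^2$ gives
\begin{equation*}
a^2\langle x,y\rangle y=b^2\langle x,y\rangle y
\end{equation*}
in $M$. Since $\langle x,y\rangle\neq 0$, $y\neq 0$, and $M$ is torsion-free, we conclude $a^2=b^2$, so $(a-b)^2=0$ in characteristic $2$. As $R$ is a domain, $a=b$, and then $b\rho(y)=by$ forces $\rho(y)=y$. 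The first condition of good involutions is never needed; the second condition together with nondegeneracy already forces $\rho=\mathrm{id}_M$.
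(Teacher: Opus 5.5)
Your proposal is correct and follows the paper's proof essentially step for step. You get $\rho(0)=0$ and $\rho(y)=\lambda_y y$ from the two lemmas, then $\lambda_y^2=1$ from the second good-involution condition, and finally $\lambda_y=1$ because $(\lambda_y+1)^2=0$ in characteristic $2$. Two differences are minor: your denominator-clearing argument is a somewhat more careful substitute for the paper's comparison of a nonzero coordinate $y_1$ in $\mathrm{Frac}(R)$, and you explicitly check that $\mathrm{id}_M$ is a good involution, which the paper notes only in the preamble to the section rather than in the proof itself.
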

\begin{proof}
    Let $\rho:M \to M$ be a good involution of $(M,*)$.
    From Lemma \ref{lem: good involution implies expansion char2}, for any nonzero element $y \in M$ there exists $\lambda \in \mathrm{Frac}(R)$ such that $\rho(y) = \lambda y$.

    For any $x \in M$ and any nonzero element $y \in M$, the left hand side of Equation \eqref{eq: good involution 2nd when char 2} is $\lambda^{2} \langle x,y \rangle$, i.e. we obtain 
    \begin{equation*}
        \lambda^{2} \langle x,y \rangle y = \langle x,y \rangle y.
    \end{equation*}
    Since $M$ is a free module and $y$ is a nonzero element, there exists a nonzero component of $y \in M$. Let us suppose that the first component $y_{1}$ of $y$ is nonzero. Then we obtain from the above equation that 
    \begin{equation*}
        \lambda^2 \langle x,y \rangle y_{1} = \langle x,y \rangle y_{1}.
    \end{equation*}
    Since $R$ is an integral domain, $\langle x,y \rangle \neq 0$ and $y_{1} \neq 0$, we obtain $\lambda^{2} = 1$.

    Note that since the characteristic of $R$ is $2$, the characteristic of $\mathrm{Frac}(R)$ is $2$. Since 
    \begin{equation*}
        \lambda^2 +1 = \lambda^2 + 2\lambda +1 = (\lambda +1)^{2} = 0,
    \end{equation*}
    we obtain $(\lambda +1)^{2} = 0$, i.e. $\lambda = 1$.
    Thus by Lemma \ref{lem: good involution implies expansion char2}, we have $\rho(y) = \lambda y = y$ for any nonzero element $y \in M$, i.e. $\rho\mid_{M \setminus \{0\}} = \mathrm{id}_{M \setminus \{0\}}$.
    Moreover, by Lemma \ref{lem: good involution sends zero to zero char2}, we have $\rho = \mathrm{id}_{M}$.
\end{proof}

The nondegeneracy of an antisymmetric bilinear form $\langle, \rangle$ on a free $R$-module $M$ is essential in Theorem \ref{thm: main 3}. 
In fact, when $R$ is a field, let 
\begin{equation*}
    W_{0} = \{x \in M \mid \langle x,y \rangle = 0 \;\text{for}\; y \in M \}
\end{equation*}
be a submodule of $M$ and $W$ a complement of $W_{0}$ with respect to the bilinear form $\langle, \rangle$.
Then we have the decomposition $M = W_{0}\oplus W$ of modules.
In this situation, the restriction $(W_{0},*\mid_{W_{0}})$ of the symplectic quandles $(M,*)$ is the trivial quandle.
Thus any involution $\rho:M \to M$ such that $\rho\mid_{W} = \mathrm{id}$ is a good involution of the symplectic quandle $(M,*)$.

\section{Good involutions when the characteristic is not 2} \label{sec: good involution char not 2}

Let $R$ be a commutative ring with identity. We assume that the characteristic of $R$ is not $2$.
In this section, we discuss the nonexistence of good involutions of symplectic quandles over such $R$.

\begin{defin}\label{def: hyperbolic pair}
    Let $M$ be a finite rank free module over a commutative ring $R$ with identity and $\langle, \rangle:M \times M \to R$ an antisymmetric bilinear form on $M$.
    An ordered pair $(e,f)$ of two elements $e,f \in M$ is called a \textit{hyperbolic pair} if $\langle e,f \rangle = 1$.
\end{defin}
Note that if $(e,f)$ is a hyperbolic pair of the free module $M$ over $R$, then both $e$ and $f$ are nonzero elements of $M$.
\begin{lemma}\label{lem: good involution implies expansion hyperbolic pair}
    Let $M$ be a finite rank free module over a commutative ring $R$ with identity and $\langle, \rangle:M \times M \to R$ an antisymmetric bilinear form on $M$.
    If there exists a hyperbolic pair $(e,f)$ of $M$, then for any good involution $\rho:M \to M$ of the symplectic quandle $(M,*)$, there exists $\lambda \in R$ such that $\rho(e) = \lambda e$ and $\lambda^2 = -1$.
\end{lemma}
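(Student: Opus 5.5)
We only need the second condition of good involutions, applied to the single element $y=e$ and tested against $x=f$. The idea is to pair the resulting identity with $f$ once more. This produces a scalar in $R$ whose square is $-1$, and in particular a unit. No integrality assumption on $R$ is needed, which matters here because $R$ is only a commutative ring.

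\emph{Step 1.} For a symplectic quandle, the second condition $x*\rho(y)=x*^{-1}y$ reads
\begin{equation*}
    \langle x,\rho(y)\rangle \rho(y) = -\langle x,y\rangle y
\end{equation*}
for all $x,y\in M$. Put $y=e$ and $x=f$. Since $\langle f,e\rangle=-\langle e,f\rangle=-1$, this gives
\begin{equation*}
    \mu\,\rho(e)=e, \qquad \text{where } \mu\coloneqq\langle f,\rho(e)\rangle\in R .
\end{equation*}

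\emph{Step 2.} Pair both sides of $\mu\rho(e)=e$ with $f$ on the right. This gives $\mu\langle \rho(e),f\rangle=\langle e,f\rangle=1$. By antisymmetry, $\langle\rho(e),f\rangle=-\langle f,\rho(e)\rangle=-\mu$. Hence $-\mu^2=1$, that is, $\mu^2=-1$. In particular $\mu$ is a unit in $R$ with $\mu^{-1}=-\mu$.

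\emph{Step 3.} Multiply $\mu\rho(e)=e$ by $\mu^{-1}=-\mu$ to obtain $\rho(e)=-\mu e$. Setting $\lambda\coloneqq-\mu\in R$, we get $\rho(e)=\lambda e$ and $\lambda^2=\mu^2=-1$, as required.

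The main obstacle is avoiding any division. Unlike the proof of Lemma \ref{lem: good involution implies expansion char2}, we cannot pass to $\mathrm{Frac}(R)$, since $R$ need not be a domain. The second pairing with $f$ in Step 2 resolves this: it shows that $\mu$ is invertible in $R$ itself, so the scalar $\lambda$ lies in $R$. The hypothesis that the characteristic is not $2$ is not used in this lemma.
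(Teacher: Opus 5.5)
Your proof is correct and is essentially the paper's own argument. You specialize the second good-involution condition to $y=e$, $x=f$ to get $\mu\rho(e)=e$, pair with $f$ to obtain $-\mu^2=1$ (so $\mu$ is a unit), and set $\lambda=-\mu$; the paper does exactly this, with its scalar called $c$.
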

\begin{proof}
    By the second condition of good involutions, we obtain 
    \begin{equation*}
        \langle x,\rho(y) \rangle \rho(y) = -\langle x,y \rangle y
    \end{equation*}
    for any $x,y \in M$.

    Let $(e,f)$ be a hyperbolic pair of $M$. Then 
    \begin{equation*}
        \langle f,\rho(e) \rangle \rho(e) = - \langle f,e \rangle e = e
    \end{equation*}
    for such $e,f$.
    Define $c \coloneqq \langle f,\rho(e) \rangle$. Then 
    \begin{equation} \label{eq: hyperbolic pair}
        c \rho(e) = e.
    \end{equation}
    Since $(e,f)$ is a hyperbolic pair, we obtain 
    \begin{equation*}
        1 = \langle e,f \rangle
        = \langle c\rho(e),f \rangle 
        = c \langle \rho(e),f \rangle 
        = c(-\langle f,\rho(e) \rangle)
        = c(-c)
        = -c^2.
    \end{equation*}
    In particular, $c$ is an invertible element of $R$ and $c^{-1} = -c$.
    From Equation \eqref{eq: hyperbolic pair}, we have 
    \begin{equation*}
        \rho(e) = c^{-1}e = -ce.
    \end{equation*}
    Define $\lambda \coloneqq -c$. Then we obtain $\rho(e) = \lambda e$ and $\lambda^{2} = (-c)^2 = -1$.
\end{proof}
\begin{lemma}\label{lem: good involution and linearity hyperbolic pair}
    Let $\rho:M \to M$ be a good involution of a symplectic quandle $(M,*)$.
    Fix $x \in M$. If there exists $\lambda_x \in R$ such that $\rho(x) = \lambda_{x}x$, then $\rho(x*y) = \lambda_{x}(x*y)$ holds for any $y \in M$.
\end{lemma}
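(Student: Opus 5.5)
The plan is to compute directly from the first condition of good involutions together with the bilinearity of $\langle,\rangle$ in the first slot. No linearity of $\rho$ is needed, because the first condition already places $\rho$ on the ``left'' of $*$, and $s_y$ is $R$-linear by Proposition \ref{prop: right-action is linear}.

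Fix $x \in M$ with $\rho(x) = \lambda_x x$, and let $y \in M$ be arbitrary. By the first condition of Definition \ref{def: good involution},
\begin{equation*}
    \rho(x*y) = \rho(x)*y = (\lambda_x x)*y .
\end{equation*}
By Proposition \ref{prop: right-action is linear}, $s_y$ is $R$-linear, so $(\lambda_x x)*y = s_y(\lambda_x x) = \lambda_x s_y(x) = \lambda_x (x*y)$. One can also see this by expanding directly:
\begin{equation*}
    \lambda_x x + \langle \lambda_x x, y\rangle y = \lambda_x\bigl(x + \langle x,y\rangle y\bigr).
\end{equation*}
Combining the two displays gives $\rho(x*y) = \lambda_x (x*y)$ for every $y \in M$, which is the statement.

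There is no real obstacle here. The only point to take care over is the order of the arguments: the first condition gives $\rho(x*y)=\rho(x)*y$, with $\rho$ applied to the left argument. We must not try to use the second condition, which would involve $\rho(y)$. We also must not assume that $\rho$ is linear; linearity is used only for $s_y$, which is established in Proposition \ref{prop: right-action is linear}.
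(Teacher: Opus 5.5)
Your proof is correct and takes the same approach as the paper. Both combine the first condition of a good involution, $\rho(x*y)=\rho(x)*y=(\lambda_x x)*y$, with the $R$-linearity of $s_y$ from Proposition \ref{prop: right-action is linear} to get $\lambda_x(x*y)$; your direct expansion via bilinearity of $\langle,\rangle$ in the first slot is an equally valid check.
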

\begin{proof}
    By the linearity of the map $s_{y}:M \to M; x \mapsto x*y$ (see Proposition \ref{prop: right-action is linear}), we obtain $\rho(x)*y = (\lambda_{x}x)*y = \lambda_{x}(x*y)$.
    By the first condition of good involutions, we obtain 
    \begin{equation*}
        \rho(x*y) = \rho(x)*y = \lambda_{x}(x*y).
    \end{equation*} 
    Therefore, we obtain $\rho(x*y) = \lambda_{x}(x*y)$.
\end{proof}
We show the third main result (Theorem \ref{thm: 3}).
\begin{thm}\label{thm: main 3}
    Let $M$ be a finite rank free module over a commutative ring $R$ with identity.
    Assume that the characteristic of $R$ is not $2$.
    If there exists a hyperbolic pair of $M$, then there exists no good involution of the symplectic quandle $(M,*)$.
\end{thm}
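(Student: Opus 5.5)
The plan is to argue by contradiction. Suppose $\rho$ is a good involution of $(M,*)$ and $(e,f)$ is a hyperbolic pair. By Lemma \ref{lem: good involution implies expansion hyperbolic pair} there is $\lambda\in R$ with $\rho(e)=\lambda e$ and $\lambda^2=-1$. By Lemma \ref{lem: good involution and linearity hyperbolic pair}, applied repeatedly, every element of the orbit $\mathcal{O}(e)$ of $e$ under the inner automorphisms $s_y$ (Proposition \ref{prop: right-action is linear}) satisfies $\rho(x)=\lambda x$. The goal is to play this rigidity off against $\rho^2=\mathrm{id}$.

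\textbf{Key observation.} Suppose we can find $x\in\mathcal{O}(e)$ with $\lambda x\in\mathcal{O}(e)$ as well. Then on one hand
\[
\rho(\lambda x)=\lambda\cdot\lambda x=-x .
\]
On the other hand $\rho(x)=\lambda x$, so $\rho(\lambda x)=\rho^2(x)=x$. Hence $2x=0$. Each $s_y$ is $R$-linear and preserves $\langle,\rangle$, since $\langle x*y,z*y\rangle=\langle x,z\rangle$ by a direct expansion using $\langle y,y\rangle=0$. So if $x=g(e)$ for a composite $g$ of maps $s_y$, then $\langle x,g(f)\rangle=1$. This gives $2=\langle 2x,g(f)\rangle=0$, contradicting that the characteristic of $R$ is not $2$.

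\textbf{Computations inside the orbit.} We work in the span of $e,f$ using explicit moves:
\begin{itemize}
\item $e*f=e+f$;
\item $(e+f)*e=f$;
\item $f*(e+f)=-e$;
\item more generally $x*(ce)=x-c^2\langle e,x\rangle e$ and $x*(cf)=x+c^2\langle x,f\rangle f$.
\end{itemize}
These act as elementary matrices on coordinates with respect to $(e,f)$, with parameters in the additive group $S$ generated by squares. The first three moves already show that $f$, $-e$, $-f$ lie in $\mathcal{O}(e)$. With $c=\lambda$ the last two moves give parameter $-1$, and $(1+\lambda)^2=2\lambda\in S$. The plan is to use the standard identity
\[
\mathrm{diag}(u,u^{-1})=E_{12}(u)E_{21}(-u^{-1})E_{12}(u)\cdot E_{12}(-1)E_{21}(1)E_{12}(-1)
\]
with $u=\lambda$, so $u^{-1}=-\lambda$. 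This would realize $\lambda e\in\mathcal{O}(e)$ (take $x=e$), or at least some pair $x,\lambda x$ inside the orbit.

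\textbf{Main obstacle.} The difficulty is that the identity requires the parameter $\lambda$ itself to lie in $S$, whereas we only know $2\lambda\in S$ and $\pm1\in S$. I would first try to realize $\lambda e$ directly by a short chain of at most three moves $s_y$ with $y=ae+bf$, solving for $a,b$ polynomially in $\lambda$. If that fails, I would look for any pair $x,\lambda x$ in $\mathcal{O}(e)$. As a fallback, I would use the second good-involution condition $\langle x,\rho(y)\rangle\rho(y)=-\langle x,y\rangle y$ at non-orbit points such as $y=e+\lambda f$ to pin down $\rho$ there, and then derive $2=0$ in the same way.

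I expect this step to be genuinely delicate. Over $R=\mathbb{Z}[i]$, a reduction mod $2$ suggests that the orbit alone may never contain both $x$ and $\lambda x$. The second condition will therefore likely have to be exploited off the orbit of $e$.
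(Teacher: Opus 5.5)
Your key observation is correct, and it is the mechanism the paper's proof uses. If $x$ and $\lambda x$ both lie in the orbit of $e$, then $\rho(\lambda x)=\lambda^2x=-x$ and also $\rho(\lambda x)=\rho^2(x)=x$, so $2x=0$. Invariance of $\langle,\rangle$ under the $s_y$ then turns this into $2=0$. The gap is that you never produce such an $x$, and that is the entire content of the theorem. You leave it as the main obstacle and offer only an unspecified fallback using the second condition off the orbit. You also suspect the orbit route fails over $\mathbb{Z}[i]$, and that suspicion is mistaken. Your mod-$2$ obstruction applies only to the subgroup generated by the axial moves $s_{ce}$ and $s_{cf}$, whose parameters lie in $S$. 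A general move $s_{ae+bf}$ acts on $Re\oplus Rf$ by the matrix $\begin{pmatrix}1+ab & -a^2\\ b^2 & 1-ab\end{pmatrix}$ in the coordinates $(e,f)$. Its entries involve the product $ab$ and not only squares, so it can introduce $\lambda$ itself.

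Concretely, take $y=e+\lambda f$. Then $e*f=e+f$ and $\langle e+f,y\rangle=\lambda-1$, so
\[
(e*f)*y=e+f+(\lambda-1)(e+\lambda f)=\lambda e-\lambda f .
\]
Next, $(\lambda e-\lambda f)*f=\lambda e-\lambda f+\lambda f=\lambda e$. Hence $\lambda e=((e*f)*y)*f$ lies in the orbit of $e$, and your observation applies with $x=e$. By the first condition and linearity of $s_f$ and $s_y$,
\[
\rho(\lambda e)=(((\lambda e)*f)*y)*f=\lambda^2 e=-e ,
\]
while $\rho(\lambda e)=\rho^2(e)=e$. So $2e=0$, and $2=\langle 2e,f\rangle=0$, a contradiction. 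This three-move chain is exactly the paper's proof. The second good-involution condition is needed only through the lemma giving $\rho(e)=\lambda e$ with $\lambda^2=-1$, not at points off the orbit.
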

\begin{proof}
    Assume on the contrary that there exists a good involution $\rho:M \to M$ of the symplectic quandle $(M,*)$.

    Let $(e,f)$ be a hyperbolic pair of $M$.
    Then Lemma \ref{lem: good involution implies expansion hyperbolic pair} implies that there exists $\lambda \in R$ such that $\rho(e) = \lambda e$ and $\lambda^2 = -1$.

    Define the element $u_{1}\coloneqq e * f$. 
    Since $\langle e,f \rangle = 1$, we have 
    \begin{equation*}
        u_{1} = e + \langle e,f \rangle f = e + f.
    \end{equation*}
    Define $y \coloneqq e + \lambda f$. Since 
    \begin{equation*}
        \langle u_1, y \rangle 
        = \langle e+f, e\lambda f \rangle 
        = \lambda \langle e,f \rangle + \langle f,e \rangle 
        = \lambda -1,
    \end{equation*}
    we obtain 
    \begin{equation*}
        u_{2} \coloneqq u_{1} * y 
        = u_{1} + \langle u_{1},y \rangle y 
        = e+f + (\lambda -1)(e + \lambda f)
        = \lambda e - \lambda f.
    \end{equation*}
    We further obtain 
    \begin{equation*}
        u_{2}* f 
        = \lambda e - \lambda f + \langle \lambda e - \lambda f, f \rangle f 
        = \lambda e - \lambda f + \lambda f 
        = \lambda e,
    \end{equation*}
    i.e. $\lambda e = u_{2}* f$.

    Next, we show that $\rho(\lambda e) = \lambda^2 e$. Since $\rho$ is a good involution and Proposition \ref{prop: right-action is linear}, we have 
    \begin{align*}
        \rho(\lambda e)
        &= \rho(u_{2}* f) \\
        &= \rho(u_{2})*f \\
        &= ((\rho(e)*f)*y)*f \\
        &= (((\lambda e)*f)*y)*f \\
        &= \lambda((e*f)*y)*f \\
        &= \lambda (u_{2}* f) \\
        &= \lambda^2 e.
    \end{align*}

    Since $\rho$ is an involution, $e = \rho^2(e)$.
    Since $\rho^2(e) = \rho(\lambda e) = \lambda^2 e$ and $(e,f)$ is a hyperbolic pair, we have $e = \lambda^2 e$. Thus we obtain 
    \begin{equation*}
        \langle e,f \rangle = \lambda^2 \langle e,f \rangle,
    \end{equation*}
    i.e. $\lambda^2 = 1$, but by Lemma \ref{lem: good involution implies expansion hyperbolic pair}, $\lambda^2 =-1$.
    This is a contradiction to the assumption that the characteristic of $R$ is not $2$.
\end{proof}
It is well-known in symplectic geometry that there exists a \textit{symplectic basis} of any symplectic vector space (see for e.g. \cite[Theorem 1.1]{MR1853077}). 
We always obtain a hyperbolic pair from the symplectic basis, i.e. the symplectic quandles have no good involutions.
Corollary \ref{cor: self-dual symplectic vector space} implies that such symplectic quandles are examples of self-dual quandles that have no good involutions, i.e. they are counterexamples for the converse of Proposition \ref{prop: not self-dual implies no good involution}.

In general, we obtain the following.
\begin{cor}
    Let $K$ be a field and $V$ a finite dimensional vector space over $K$. 
    If the characteristic of $K$ is not $2$ and an antisymmetric bilinear form $\langle, \rangle: V \times V \to K$ is nontrivial, i.e. there exist $x,y \in V$ such that $\langle x,y \rangle \neq 0$,
    then there exists no good involution of the symplectic quandle $(V,*)$.
\end{cor}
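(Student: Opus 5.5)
The plan is to reduce the corollary to Theorem \ref{thm: main 3}. It suffices to produce a hyperbolic pair of $V$, that is, elements $e,f\in V$ with $\langle e,f\rangle = 1$. The hypothesis that $K$ has characteristic not $2$ is exactly the standing assumption of Theorem \ref{thm: main 3}. A finite dimensional vector space over $K$ is a finite rank free $K$-module, so $(V,*)$ is a symplectic quandle over $R=K$ in the sense of Section \ref{sec: symplectic quandle}.

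To build the hyperbolic pair, I will use the nontriviality assumption. Pick $x,y\in V$ with $c\coloneqq\langle x,y\rangle\neq 0$. Since $K$ is a field, $c$ is invertible, so set $e\coloneqq c^{-1}x$ and $f\coloneqq y$. Bilinearity then gives $\langle e,f\rangle = c^{-1}\langle x,y\rangle = 1$, so $(e,f)$ is a hyperbolic pair in the sense of Definition \ref{def: hyperbolic pair}.

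Applying Theorem \ref{thm: main 3} to this pair shows that $(V,*)$ has no good involution, which proves the corollary. No step here is a real obstacle. The only points to check are that the invertibility of the nonzero scalar $\langle x,y\rangle$ (this is where $K$ being a field is used) is enough to normalize to $1$, and that Theorem \ref{thm: main 3} needs neither nondegeneracy nor the integral domain hypothesis, only characteristic $\neq 2$ and the existence of one hyperbolic pair.
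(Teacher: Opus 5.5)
Your proposal is correct and matches the paper's proof: the paper also sets $a \coloneqq \langle x,y\rangle$, notes that $(a^{-1}x,y)$ is a hyperbolic pair, and applies Theorem~\ref{thm: main 3}. Your remark that Theorem~\ref{thm: main 3} requires only characteristic $\neq 2$ and one hyperbolic pair, and not nondegeneracy, is also accurate.
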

\begin{proof}
    Assume that there exist $x,y \in V$ such that $\langle x,y \rangle \neq 0$. 
    Define $a \coloneqq \langle x,y \rangle$.
    Then the pair $(a^{-1}x,y)$ is a hyperbolic pair.
    Thus Theorem \ref{thm: main 3} implies the nonexistence of good involutions.
\end{proof}
\begin{cor}
    Let $R$ be a commutative ring with identity and $M$ a finite rank free module over $R$.
    If the characteristic of $R$ is not $2$ and an antisymmetric bilinear form $\langle, \rangle: M \times M \to R$ is unimoduler, then there exists no good involution of the symplectic quandle $(M,*)$.
\end{cor}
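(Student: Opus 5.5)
The plan is to reduce the statement to Theorem \ref{thm: main 3} by producing a hyperbolic pair from the unimodularity of $\langle, \rangle$. By assumption the map $M \to M^{\vee}$, $x \mapsto \langle x, - \rangle$, is bijective. We may assume $M \neq \{0\}$, following the standing convention of the paper; if $M=\{0\}$ the quandle is the one-point trivial quandle, and that case is handled separately by the remark that the statements also hold for the zero module.

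The key step is the construction of the pair. Fix a basis $b_1,\dots,b_n$ of the free module $M$ with $n \geq 1$, and let $b_1^{\vee} \in M^{\vee}$ be the coordinate functional, so that $b_1^{\vee}(b_1)=1$. Surjectivity of $M \to M^{\vee}$ gives an element $e \in M$ with $\langle e, - \rangle = b_1^{\vee}$. Then
\begin{equation*}
\langle e, b_1 \rangle = b_1^{\vee}(b_1) = 1,
\end{equation*}
so $(e, b_1)$ is a hyperbolic pair in the sense of Definition \ref{def: hyperbolic pair}. Only surjectivity is needed here; injectivity plays no role.

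Since the characteristic of $R$ is not $2$ and $M$ has a hyperbolic pair, Theorem \ref{thm: main 3} applies directly and shows that $(M,*)$ has no good involution. There is no real obstacle in this argument. The only point needing care is that $M$ is nonzero, which is what guarantees a basis element $b_1$ with a dual functional taking the value $1$.
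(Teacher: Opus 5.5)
Your proof is correct and follows the paper's argument. The paper only asserts that unimodularity yields a hyperbolic pair and then invokes Theorem~\ref{thm: main 3}, whereas you make the pair explicit as $(e,b_1)$ with $\langle e,-\rangle=b_1^{\vee}$.

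One correction to your aside: the case $M=\{0\}$ is not rescued by the paper's remark, because the zero form on $\{0\}$ is unimodular and $\mathrm{id}_{\{0\}}$ is a good involution, so the corollary is actually false there and the assumption $M\neq\{0\}$ (which your choice of $b_1$ uses) is genuinely necessary.
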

\begin{proof}
    By the unimodulerity of the antisymmetric bilinear form $\langle, \rangle: M \times M \to R$, there exists a hyperbolic pair of $M$.
    Thus Theorem \ref{thm: main 3} implies the nonexistence of good involutions.
\end{proof}

We give a counterexample for the converse of Theorem \ref{thm: main 3}, i.e. an example of the symplectic quandle with no hyperbolic pair and no a good involution.

\begin{example} \label{eg: no good involution but no hyperbolic pair}
    Let $R \coloneqq \mathbb{Z}/9\mathbb{Z}$ and $M \coloneqq (R)^2$ a rank two free module over $R$.
    Define an antisymmetric bilinear form $\langle, \rangle: M \times M \to R$ by 
    \begin{equation*}
        \langle (a,b), (c,d) \rangle 
        \coloneqq 
        3 (ad - bc).
    \end{equation*}
    Then $(M,\langle,\rangle)$ defines a symplectic quandle $(M,*)$.
    
    This symplectic quandle $(M,*)$ has no good involution, but the module $M$ equipped with $\langle, \rangle$ has no hyperbolic pair. 
\end{example}
\begin{proof}
    First, we show that there exists no hyperbolic pair of $M$, i.e. $\langle x,y \rangle \neq 1$ for any $x,y \in M$.

    Assume on the contrary that $(x,y)$ is a hyperbolic pair of $M$.
    By the definition of the bilinear form $\langle, \rangle$ on $M$, there exists $k \in R = \mathbb{Z}/9\mathbb{Z}$ such that $\langle x,y \rangle = 3k$, i.e. $1 = 3k$. This contradicts to the fact that $3 \in R$ is not an invertible element of $R =\mathbb{Z}/9\mathbb{Z}$.
    Thus there exists no hyperbolic pair of $M$.

    Second, we show that the symplectic quandle $(M,*)$ has no good involution.

    Assume on the contrary that there exists a good involution $\rho:M \to M$ of $(M,*)$.
    By the second condition of good involutions, we obtain 
    \begin{equation*}
        \langle x, \rho(y) \rangle \rho(y) = - \langle x,y \rangle y
    \end{equation*}
    for any $x,y \in M$. Thus we have 
    \begin{equation*}
        \langle e_{2}, \rho(e_{1}) \rangle \rho(e_{1})
        = - \langle e_{2}, e_{1} \rangle e_{1} = 3e_{1}
    \end{equation*}
    for $e_{1} = (1,0), e_{2} = (0,1) \in M$.
    Define $\rho(e_{1}) = (a,b)$, then since 
    \begin{equation*}
        \langle e_{2}, \rho(e_{1}) \rangle
        = -3a,
    \end{equation*}
    we have 
    \begin{equation*}
        \langle e_{2}, \rho(e_{1}) \rangle \rho(e_{1}) = (-3a^2, -3ab) = (6a^2,6ab).
    \end{equation*}
    Comparing the corresponding components, we obtain $6a^2 = 3$ and $6ab = 0$.
    Since any element $a \in \mathbb{Z}/9\mathbb{Z}$ does not satisfiy $6a^2 = 3$, this is a contradiction.
    Thus there exists no good involution of $(M,*)$.
\end{proof}


\bibliographystyle{alpha}
\bibliography{symplectic_quandle}

@article {MR2493966,
    AUTHOR = {Navas, Esteban Adam and Nelson, Sam},
     TITLE = {On symplectic quandles},
   JOURNAL = {Osaka J. Math.},
  FJOURNAL = {Osaka Journal of Mathematics},
    VOLUME = {45},
      YEAR = {2008},
    NUMBER = {4},
     PAGES = {973--985},
      ISSN = {0030-6126},
   MRCLASS = {20N02 (57M25)},
  MRNUMBER = {2493966},
MRREVIEWER = {\'Agota\ Figula},
       URL = {http://projecteuclid.org.kyoto-u.idm.oclc.org/euclid.ojm/1227708829},
}

@article {MR4688855,
    AUTHOR = {Taniguchi, Yuta},
     TITLE = {Good involutions of generalized {A}lexander quandles},
   JOURNAL = {J. Knot Theory Ramifications},
  FJOURNAL = {Journal of Knot Theory and its Ramifications},
    VOLUME = {32},
      YEAR = {2023},
    NUMBER = {12},
     PAGES = {Paper No. 2350081, 7},
      ISSN = {0218-2165,1793-6527},
   MRCLASS = {20N02 (57K12)},
  MRNUMBER = {4688855},
MRREVIEWER = {Jonathan\ D. H. Smith},
       DOI = {10.1142/S0218216523500815},
       URL = {https://doi-org.kyoto-u.idm.oclc.org/10.1142/S0218216523500815},
}

@book {MR1878556,
    AUTHOR = {Lang, Serge},
     TITLE = {Algebra},
    SERIES = {Graduate Texts in Mathematics},
    VOLUME = {211},
   EDITION = {third},
 PUBLISHER = {Springer-Verlag, New York},
      YEAR = {2002},
     PAGES = {xvi+914},
      ISBN = {0-387-95385-X},
   MRCLASS = {00A05 (15-02)},
  MRNUMBER = {1878556},
       DOI = {10.1007/978-1-4613-0041-0},
       URL = {https://doi-org.kyoto-u.idm.oclc.org/10.1007/978-1-4613-0041-0},
}

@article {MR3001556,
    AUTHOR = {Albers, Peter and Frauenfelder, Urs},
     TITLE = {The space of linear anti-symplectic involutions is a
              homogenous space},
   JOURNAL = {Arch. Math. (Basel)},
  FJOURNAL = {Archiv der Mathematik},
    VOLUME = {99},
      YEAR = {2012},
    NUMBER = {6},
     PAGES = {531--536},
      ISSN = {0003-889X,1420-8938},
   MRCLASS = {53D35 (20G20 53D40)},
  MRNUMBER = {3001556},
       DOI = {10.1007/s00013-012-0461-4},
       URL = {https://doi-org.kyoto-u.idm.oclc.org/10.1007/s00013-012-0461-4},
}

@misc{ta2025goodinvolutionsconjugationsubquandles,
      title={Good involutions of conjugation subquandles}, 
      author={Luc Ta},
      year={2025},
      eprint={2505.08090},
      archivePrefix={arXiv},
      primaryClass={math.GT},
      url={https://arxiv.org/abs/2505.08090}, 
      note={arXiv:2505.08090},
}

@misc{ta2025goodinvolutionstwistedconjugation,
      title={Good involutions of twisted conjugation subquandles and {A}lexander quandles}, 
      author={Luc Ta},
      year={2025},
      eprint={2508.16772},
      archivePrefix={arXiv},
      primaryClass={math.GT},
      url={https://arxiv.org/abs/2508.16772}, 
      note={arXiv:2508.16772},
}

@article {MR21002,
    AUTHOR = {Takasaki, Mituhisa},
     TITLE = {Abstraction of symmetric transformations},
   JOURNAL = {T\^ohoku Math. J.},
  FJOURNAL = {The T\^ohoku Mathematical Journal},
    VOLUME = {49},
      YEAR = {1943},
     PAGES = {145--207},
      ISSN = {0040-8735,1881-2015},
   MRCLASS = {20.0X},
  MRNUMBER = {21002},
MRREVIEWER = {S.\ Kakutani},
}

@article {MR638121,
    AUTHOR = {Joyce, David},
     TITLE = {A classifying invariant of knots, the knot quandle},
   JOURNAL = {J. Pure Appl. Algebra},
  FJOURNAL = {Journal of Pure and Applied Algebra},
    VOLUME = {23},
      YEAR = {1982},
    NUMBER = {1},
     PAGES = {37--65},
      ISSN = {0022-4049,1873-1376},
   MRCLASS = {57M25 (20F29 20N05 53C35)},
  MRNUMBER = {638121},
MRREVIEWER = {Mark\ E.\ Kidwell},
       DOI = {10.1016/0022-4049(82)90077-9},
       URL = {https://doi-org.kyoto-u.idm.oclc.org/10.1016/0022-4049(82)90077-9},
}

@incollection {MR2371714,
    AUTHOR = {Kamada, Seiichi},
     TITLE = {Quandles with good involutions, their homologies and knot
              invariants},
 BOOKTITLE = {Intelligence of low dimensional topology 2006},
    SERIES = {Ser. Knots Everything},
    VOLUME = {40},
     PAGES = {101--108},
 PUBLISHER = {World Sci. Publ., Hackensack, NJ},
      YEAR = {2007},
      ISBN = {978-981-270-593-8; 981-270-593-7},
   MRCLASS = {57Q45},
  MRNUMBER = {2371714},
       DOI = {10.1142/9789812770967\_0013},
       URL = {https://doi-org.kyoto-u.idm.oclc.org/10.1142/9789812770967_0013},
}

@article {MR2657689,
    AUTHOR = {Kamada, Seiichi and Oshiro, Kanako},
     TITLE = {Homology groups of symmetric quandles and cocycle invariants
              of links and surface-links},
   JOURNAL = {Trans. Amer. Math. Soc.},
  FJOURNAL = {Transactions of the American Mathematical Society},
    VOLUME = {362},
      YEAR = {2010},
    NUMBER = {10},
     PAGES = {5501--5527},
      ISSN = {0002-9947,1088-6850},
   MRCLASS = {57M27 (57Q45)},
  MRNUMBER = {2657689},
MRREVIEWER = {Masahico\ Saito},
       DOI = {10.1090/S0002-9947-2010-05131-1},
       URL = {https://doi-org.kyoto-u.idm.oclc.org/10.1090/S0002-9947-2010-05131-1},
}

@article {MR672410,
    AUTHOR = {Matveev, S. V.},
     TITLE = {Distributive groupoids in knot theory},
   JOURNAL = {Mat. Sb. (N.S.)},
  FJOURNAL = {Matematicheski\u i\ Sbornik. Novaya Seriya},
    VOLUME = {119(161)},
      YEAR = {1982},
    NUMBER = {1},
     PAGES = {78--88, 160},
      ISSN = {0368-8666},
   MRCLASS = {57M25 (20L15)},
  MRNUMBER = {672410},
MRREVIEWER = {Jonathan\ A.\ Hillman},
}

@book {MR1853077,
    AUTHOR = {Cannas da Silva, Ana},
     TITLE = {Lectures on symplectic geometry},
    SERIES = {Lecture Notes in Mathematics},
    VOLUME = {1764},
 PUBLISHER = {Springer-Verlag, Berlin},
      YEAR = {2001},
     PAGES = {xii+217},
      ISBN = {3-540-42195-5},
   MRCLASS = {53Dxx (53-01)},
  MRNUMBER = {1853077},
MRREVIEWER = {Brendan\ J.\ Foreman},
       DOI = {10.1007/978-3-540-45330-7},
       URL = {https://doi-org.kyoto-u.idm.oclc.org/10.1007/978-3-540-45330-7},
}
\end{document}